\tikzstyle{vertex}=[circle, draw, inner sep=0pt, minimum size=4pt]
\newcommand{\vertex}{\node[vertex]}
\definecolor{pur}{RGB}{186,146,162}
\theoremstyle{definition}%
\newtheorem{theorem}{Theorem}[section]
\newtheorem{proposition}[theorem]{Proposition}%
\newtheorem{lemma}[theorem]{Lemma}%
\newtheorem{definition}[theorem]{Definition}%
\newtheorem{corollary}[theorem]{Corollary}%
\newtheorem{example}[theorem]{Example}%
\newtheorem{remark}[theorem]{Remark}%
\newcommand\be{\mathbf{e}}
\newcommand\bp{\mathbf{p}}
\newcommand\bq{\mathbf{q}}
\newcommand\bu{\mathbf{u}}
\newcommand\bx{\mathbf{x}}
\newcommand\bbR{\mathbb{R}}
\newcommand\bbZ{\mathbb{Z}}
\newcommand\calA{\mathcal{A}}
\newcommand\calE{\mathcal{E}}
\newcommand\calF{\mathcal{F}}
\newcommand\calP{\mathcal{P}}
\newcommand\calQ{\mathcal{Q}}
\newcommand\calR{\mathcal{R}}
\newcommand\calS{\mathcal{S}}
\newcommand\calT{\mathcal{T}}
\newcommand\calU{\mathcal{U}}
\newcommand\scrT{\mathscr{T}}
\newcommand\inv{^{-1}}
\DeclareMathOperator{\Cat}{Cat}
\DeclareMathOperator{\vol}{vol}
\title{A subdivision algebra for a product of two simplices via flow polytopes}
\author[von Bell]{\normalsize{Matias von Bell$^{1}$
\footnote{$^1$Email: matias.vonbell@gmail.com
}}}
\address[von Bell]{Institute of Geometry, Graz University of Technology, Kopernikusgasse 24, Graz, A-8010, Austria}
\begin{document}

\begingroup
\let\MakeUppercase\relax 
\maketitle

\endgroup

\parskip=5pt

\begin{abstract}
For a lattice path $\nu$ from the origin to a point $(a,b)$ using steps $E=(1,0)$ and $N=(0,1)$, we construct an associated flow polytope $\calF_{\widehat{G}_B(\nu)}$ arising from an acyclic graph where bidirectional edges are permitted.
We show that the flow polytope $\calF_{\widehat{G}_B(\nu)}$ admits a subdivision dual to a $w$-simplex, where $w$ is the number of valleys in the path $\overline{\nu} = E\nu N$. 
Refinements of this subdivision can be obtained by reductions of a polynomial $P_\nu$ in a generalization of M\'esz\'aros' subdivision algebra for acyclic root polytopes where negative roots are allowed. 
Via an integral equivalence between $\calF_{\widehat{G}_B(\nu)}$ and the product of simplices $\Delta_a\times \Delta_b$, we thereby obtain a subdivision algebra for a product of two simplices. 
As a special case, we give a reduction order for reducing $P_\nu$ that yields the cyclic $\nu$-Tamari complex of Ceballos, Padrol, and Sarmiento.

\vspace{0.4cm} 
\noindent Keywords: Subdivision algebra, product of simplices, flow polytope, triangulation.
\vspace{0.2cm}

\noindent MSC Classification: 52B11, 05E16, 05E45 

\end{abstract}

\thispagestyle{empty}

\section{Introduction} 
\label{sec1}

Given a standard $a$-simplex $\Delta_a$ and a standard $b$-simplex $\Delta_b$, their cartesian product is the $(a+b)$-dimensional polytope $$\Delta_a\times \Delta_b = \mathrm{conv}\{(\be_i,\be_j) \mid \be_i \in \Delta_a \text{ and } \be_j \in \Delta_b \}.$$  

The triangulations of $\Delta_a \times \Delta_b$ have been extensively studied. They serve as building blocks in the study of triangulations of more complicated polytopes \cite{Hai91,OS03}, and play an important role in understanding triangulations more generally \cite{DeL96,San00}.
They have also garnered interest from a variety of perspectives, having connections to algebraic geometry (Schubert calculus \cite{AB07}, Segre varieties \cite{CHT06}, Gr\"obner bases \cite{Stu96}), tropical geometry (tropical convexity \cite{DS03}, tropical hyperplane arrangements and oriented matroids \cite{AD09,CPS19}), and optimization (dual transportation polytopes \cite{DKOS09}).

In this article, we study $\Delta_a \times \Delta_b$ from the perspective of flow polytopes, which are a rich family of polytopes arising from placing flows through directed acyclic graphs. 
Flow polytopes have close connections to many areas including representation theory \cite{BV08}, diagonal harmonics \cite{MMR17}, Schubert polynomials \cite{MS17}, and toric geometry \cite{H03}. 
The product of simplices $\Delta_a\times \Delta_b$ is the flow polytope $\calF_{G_{a,b}}$, where the underlying graph $G_{a,b}$ has vertex set $\{s,1,t\}$, along with $a+1$ copies of the edge $(s,1)$ and $b+1$ copies of the edge $(1,t)$. 
The vertices $s$ and $t$ are respectively the source and sink of the graph. 
For example, the flow polytope of the graph on the left in Figure~\ref{fig:prodOfSimplices} is $\Delta_4\times \Delta_3$.
At first glance, the flow polytope perspective does not seem to yield anything new in regards to triangulations of $\Delta_a \times \Delta_b$. 
In particular, the Danilov--Karzanov--Koshevoy triangulations \cite{DKK12} (or equivalently Stanley--Postnikov triangulations \cite{MMS19}) of $\calF_{G_{a,b}}$ are the staircase triangulations of $\Delta_a \times \Delta_b$, which are well understood (see \cite[Section 6.2]{DRS10}).

To obtain more triangulations, we slightly expand the typical definition of a flow polytope in Section~\ref{sec:2}. 
The more general definition allows negative flows and permits the underlying graph $G$ to have bidirectional edges, as depicted in the graph on the right of Figure~\ref{fig:prodOfSimplices}.
This allows us to generate a class of $\binom{a+b}{a}$ flow polytopes in Section~\ref{sec:3} that are integrally equivalent to $\Delta_a\times \Delta_b$.
In particular, for any lattice path $\nu$ from the origin to a point $(a,b)$ in the first quadrant using steps $N=(0,1)$ and $E=(1,0)$, we construct a graph $\widehat{G}_B(\nu)$ and show that its flow polytope $\calF_{\widehat{G}_B(\nu)}$ is integrally equivalent to $\Delta_a\times \Delta_b$ (Lemma~\ref{lem:intEq}). 
Therefore triangulations of $\calF_{\widehat{G}_B(\nu)}$ give triangulation of $\Delta_a\times \Delta_b$.
Furthermore, we show in Theorem~\ref{thm:subdivision} that the lattice path $\nu$ induces a subdivision of $\calF_{\widehat{G}_B(\nu)}$ whose inner faces are integrally equivalent to flow polytopes. 
The dual to this induced subdivision is a $w$-simplex, where $w$ is the number of valleys (consecutive $EN$ pairs) in the path $\overline{\nu}:=E\nu N$. We therefore refer to it as the {\em simplex-subdivision} of $\calF_{\widehat{G}_B(\nu)}$. 

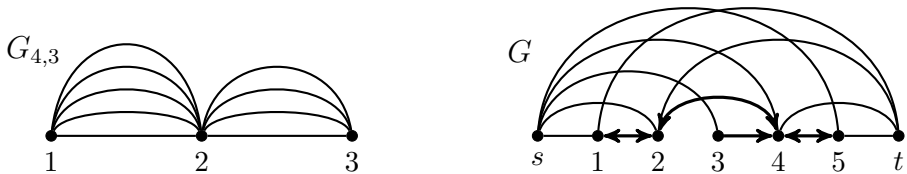
\begin{figure}
\begin{center}
\begin{tikzpicture}[scale=0.8]
\begin{scope}[xshift=230, yshift=0, scale=1]
	\vertex[fill,label=below:\small{$s$}](a1) at (1,0) {};
	\vertex[fill,label=below:\small{$1$}](a2) at (2,0) {};
	\vertex[fill,label=below:\small{$2$}](a3) at (3,0) {};
	\vertex[fill,label=below:\small{$3$}](a4) at (4,0) {};
	\vertex[fill,label=below:\small{$4$}](a5) at (5,0) {};
	\vertex[fill,label=below:\small{$5$}](a6) at (6,0) {};
	\vertex[fill,label=below:\small{$t$}](a7) at (7,0) {};

	\draw[>=stealth', thick] (a1)--(a2);
    \draw[>=stealth', <->, very thick] (a2)--(a3);	
	\draw[>=stealth', <->, very thick] (a3) .. controls (3.25,0.8) and (4.75,0.8) .. (a5);
	\draw[>=stealth', <->, very thick] (a5)--(a6);
	\draw[>=stealth', ->, very thick] (a4)--(a5);
	\draw[>=stealth', thick] (a6)--(a7);
	\draw[>=stealth', thick] (a1) .. controls (1.25, .7) and (2.75, .7) .. (a3);
	\draw[>=stealth', thick] (a1) .. controls (1.25, 1.4) and (3.75, 1.4) .. (a4);
	\draw[>=stealth', thick] (a1) .. controls (1.25, 2.8) and (5.75, 2.8) .. (a6);
	\draw[>=stealth', thick] (a1) .. controls (1.25, 2.1) and (4.75, 2.1) .. (a5);
	\draw[>=stealth', thick] (a5) .. controls (5.25, 0.7) and (6.75, 0.7) .. (a7);
	\draw[>=stealth', thick] (a3) .. controls (3.25, 2.1) and (6.75, 2.1) ..  (a7);
	\draw[>=stealth', thick] (a2) .. controls (2.25, 2.8) and (6.75, 2.8) ..  (a7);
\end{scope}

\begin{scope}[xshift=0, yshift=0, scale=1]
	\vertex[fill,label=below:\small{$1$}](a1) at (1,0) {};
	\vertex[fill,label=below:\small{$2$}](a) at (3.5,0) {};
	\vertex[fill,label=below:\small{$3$}](a6) at (6,0) {};

	\draw[>=stealth', thick] (a1)--(a);
	\draw[>=stealth', thick] (a)--(a6);
	\draw[>=stealth', thick] (a1) .. controls (1.25, .5) and (3.25, .5) .. (a);
	\draw[>=stealth', thick] (a1) .. controls (1.25, 1) and (3.25, 1) .. (a);
	\draw[>=stealth', thick] (a1) .. controls (1.25, 1.5) and (3.25, 1.5) .. (a);
	\draw[>=stealth', thick] (a1) .. controls (1.25, 2) and (3.25, 2) .. (a);
	\draw[>=stealth', thick] (a) .. controls (3.75, 0.5) and (5.75, 0.5) .. (a6);
	\draw[>=stealth', thick] (a) .. controls (3.75, 1) and (5.75, 1) ..  (a6);
	\draw[>=stealth', thick] (a) .. controls (3.75, 1.5) and (5.75, 1.5) ..  (a6);
\end{scope}

\begin{scope}[xshift=20, yshift=40, scale=1]
\node[] (a) at (0,0) {$G_{4,3}$};
\end{scope}

\begin{scope}[xshift=250, yshift=40, scale=1]
\node[] (a) at (0,0) {$G$};
\end{scope}
\end{tikzpicture}
\end{center}
\caption{The flow polytopes of the above graphs are integrally equivalent to $\Delta_4\times \Delta_3$. For visual clarity, the arrow heads are omitted for edges incident to the source $s$ and sink $t$. Contracting the bolded inner edges in $G$ yields $G_{4,3}$} 
\label{fig:prodOfSimplices}
\end{figure}

A useful tool for subdividing certain flow polytopes is M\'esz\'aros' subdivision algebra.  
It was first used to study subdivisions of acyclic root polytopes \cite{Mes11}, and has since also been extensively used to study subdivisions of flow polytopes \cite{Mes14,Mes16,MS17}.
Ceballos, Padrol, and Sarmiento studied the geometry of Tamari lattices in \cite{CPS19} via triangulations of a subpolytope $\calU_{I, \overline{J}}$ of a product of two simplices 
$\Delta_{\abs{I}} \times$ $\Delta_{\abs{\overline{J}}}$. 
They noted that $\calU_{I,\overline{J}}$ can be projected into a family of polytopes which include acyclic root polytopes, and they were interested to know if the subdivision algebra can be used to obtain subdivisions of $\calU_{I,\overline{J}}$. 
This was shown to be possible by the present author and Yip in a recent preprint titled ``On the subdivision algebra for the polytope $\calU_{I,\overline{J}}$''.  
Since $\calU_{I,\overline{J}}$ is a subpolytope of $\Delta_{\abs{I}}\times \Delta_{\abs{\overline{J}}}$, it is natural to wonder if the subdivision algebra can be extended further to products of two simplices.
In Section~\ref{sec:4} we give such an extension, which can be used to obtain refinements of the simplex-subdivision induced by $\nu$. 
In particular, in Theorem~\ref{thm:polynomial} we encode the simplex-subdivision of $\calF_{\widehat{G}_B(\nu)}$ with a polynomial $P_\nu$, and show that its reductions encode refinements of the simplex-subdivision induced by $\nu$. 

The full-dimensional cells in the simplex-subdivision of $\calF_{\widehat{G}_B(\nu)}$ arise as flow polytopes over graphs generated by cyclic shifting the path $\overline{\nu}$. 
Obtaining subdivisions for a product of simplices by cyclically shifting a path was first considered by Ceballos, Padrol, and Sarmiento in \cite{CPS15}, where they introduce the {\em Dyck path triangulation} of $\Delta_a\times \Delta_a$ and the {\em generalized Dyck path triangulation} of $\Delta_{ka}\times \Delta_a$.
In \cite{CPS19} they further generalize these triangulations to the {\em $\nu$-cyclohedral triangulation} of $\Delta_a\times \Delta_b$, which is dual to the $\nu$-cyclohedron. 
We conclude the article by showing how to obtain these triangulations using the subdivision algebra. 
Specifically, in Theorem~\ref{thm:lengthReduction} 
we give a natural reduction order for the polynomial $P_\nu$ which yields the $\nu$-cyclohedral triangulation of $\Delta_a\times \Delta_b$. 
In the special case when $\nu=E^aN^b$, this reduction order was shown in \cite[Theorem 1.3]{BGMY} to be a staircase triangulations of $\Delta_a\times \Delta_b$.

\section{Flow polytopes and negative flows}
\label{sec:2}

The typical definitions of a flow polytope encountered in the literature \cite{BV08, MS17,MMS19,DKK12,Mes15} do not allow negative flows or bidirectional edges in their underlying graphs. In this section, we remedy the situation by giving a slightly modified definition.
We will see that flow polytopes with the modified definition are integrally equivalent to the traditional flow polytopes, which might suggest that the new definition given is unnecessary.  
However, the extended definition proves to be pivotal for our purposes in the remainder of the article.

Let $G$ be a connected directed acyclic graph with vertex set $V(G)=\{1,2,\ldots, n\}$ and edge multiset $E(G)$ with $m$ directed edges. An edge $(i,j)$ with $i<j$ can be of three possible types: a \textbf{forward edge} $(i,j,+)$ directed toward $j$, a \textbf{backward edge} $(i,j,-)$ directed toward $i$, or a \textbf{bidirectional edge} $(i,j,\pm)$. 
For a vertex $j$, we let $S(j)$ denote the set of \textbf{smaller} edges of the form $(i,j,+)$, $(i,j,-)$, or $(i,j,\pm)$ with $i<j$, and we let $L(j)$ denote the set of \textbf{larger} edges of the form $(j,k,+)$, $(j,k,-)$, or $(j,k,\pm)$ with $j<k$. 
Define the \textbf{unit flow vector} $\bu = (u_1,...,u_n)$ to be the vector $\be_1 - \be_n = (1,0,...,0,-1) \in \bbZ^n$. 
At each vertex $i\in V(G)$ we assign the netflow $u_i$.
A \textbf{unit flow} (or $\bu$-flow) on $G$ is a tuple $(x_e)_{e\in E} \in \bbR^m$ satisfying

\begin{equation}
\label{eq:flowPreserved}
\sum_{e \in L(j)} x_e - \sum_{e \in S(j)}  x_e = u_j
\end{equation}
with $0 \leq x_e \leq 1$ if $e$ is a forward edge, $-1\leq x_e \leq 0$ if $e$ is a backward edge, and $-1\leq x_e \leq 1$ if $e$ is bidirectional. 
We think of $\abs{x_e}$ as the amount of flow in the edge $e$, with the sign of $x_e$ indicating the direction of flow.
If a bidirectional edge has positive flow, we treat it as a forward edge, and if it has negative flow, we treat it as a backward edge. 

The set of all $\bu$-flows on $G$ is denoted by $\calF_{G}$. 
We prove in Proposition~\ref{prop:GBpolytope} that $\calF_{G}$ is a convex polytope, which justfies calling $\calF_{G}$ the \textbf{flow polytope of $G$}.
When all edges in $G$ are forward edges, $G$ is necessarily acyclic, and in this case our definition of a flow polytope agrees with other definitions in the literature (see for example \cite{DKK12,MMS19}).

The only vertices with non-zero netflow in a $\bu$-flow are the vertices $1$ and $n$, and we may therefore assume that they are the only source and sink in $G$ respectively.
A vertex that is not a source or sink is an \textbf{inner vertex} and an edge not incident to the source or sink is an \textbf{inner edge}. 
A \textbf{route} $R$ in $G$ is a maximal directed path from the source to the sink.
Note that an edge in $G$ which is not in a route cannot have non-zero flow, so we can assume that all edges in $G$ are in some route. 
Furthermore, since $G$ is acyclic, the source and sink vertex are each incident to at most one edge in a route. 
We can therefore also assume that all edges incident to the source and sink are forward edges. 
Let $\calR$ denote the set of all routes in $G$.
To each route $R \in \calR$ we associate a signed characteristic vector $\bx_R = (x_e)_{e\in E(G)}$ as follows. 
If $e \in R$ is a forward edge $(i,j,+)$ or a bidirectional edge $(i,j,\pm)$ traversed from $i$ to $j$ in $R$, we set $x_e = 1$. 
If $e \in R$ is a backward edge $(i,j,-)$ or a bidirectional edge $(i,j,\pm)$ traversed from $j$ to $i$ in $R$, then we set $x_e = -1$. 
For any $e \notin R$, we set $x_e=0$. 
The following proposition now justifies calling $\calF_{G}$ a flow polytope, and gives an alternate description as the convex hull of routes. 

\begin{proposition}
\label{prop:GBpolytope}
The set of all $\bu$-flows $\calF_{G}$ is a polytope if and only if $G$ is acyclic. 
Furthermore, when $G$ is acyclic, the vertex description of $\calF_{G}$ is given by $$\calF_{G} = \mathrm{conv}\{ \bx_R \mid R \in \calR \}.$$
\end{proposition}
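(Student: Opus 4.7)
The plan is to prove the ``furthermore'' clause first, because once we identify $\calF_{G}$ with $\mathrm{conv}\{\bx_R \mid R\in\calR\}$ the $(\Leftarrow)$ direction of the biconditional is automatic, and the $(\Rightarrow)$ direction then reduces to exhibiting an unbounded ray in $\calF_{G}$ whenever $G$ is not acyclic. Throughout I work with the auxiliary directed graph $\hat{G}$ obtained from $G$ by replacing each bidirectional edge $(i,j,\pm)$ with two opposite directed edges, orienting each forward edge $(i,j,+)$ as $i\to j$ and each backward edge $(i,j,-)$ as $j\to i$; by ``$G$ is acyclic'' I mean $\hat{G}$ has no directed cycle.

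For the inclusion $\mathrm{conv}\{\bx_R\}\subseteq\calF_{G}$, I would verify directly that each $\bx_R$ satisfies \eqref{eq:flowPreserved} and the sign constraints. At an inner vertex $j$ on $R$ the equation is automatic, since $R$ enters and exits $j$ exactly once and the $\pm 1$ convention of $\bx_R$ is tailored so that these two contributions cancel in the equation at $j$; at the source and sink they produce the residues $+1$ and $-1$. The sign conditions are immediate because a directed path in $\hat{G}$ must traverse a forward edge as $i\to j$ and a backward edge as $j\to i$, so $\bx_R$ picks up the correct sign on every edge it uses. Convexity of $\calF_{G}$ as a solution set of linear (in)equalities then gives the inclusion.

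For the reverse inclusion, given $\bx\in\calF_{G}$ I would form a weighted directed subgraph $\hat{G}_{\bx}\subseteq\hat{G}$ by including, for each $G$-edge $e=(i,j)$ with $x_e\ne 0$, the directed edge $i\to j$ with weight $x_e$ when $x_e>0$ and $j\to i$ with weight $-x_e$ when $x_e<0$. Flow balance in $\hat{G}_{\bx}$ translates verbatim from \eqref{eq:flowPreserved}, and $\hat{G}_{\bx}$ is acyclic as a subgraph of the acyclic $\hat{G}$. The classical acyclic flow decomposition theorem then writes the weighted $\bu$-flow on $\hat{G}_{\bx}$ as $\sum_R \lambda_R \mathbf{1}_R$ with $\lambda_R\ge 0$, and unit netflow at the source forces $\sum_R \lambda_R = 1$. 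Lifting each path back to $G$ turns $\mathbf{1}_R$ into $\bx_R$, realising $\bx$ as a convex combination of route vectors and simultaneously showing $\calF_{G}$ is a polytope.

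For the converse direction of the biconditional I would argue by contrapositive. If $\hat{G}$ contains a directed cycle $C$, define $\by_C\in\R^{E(G)}$ using the same $\pm 1$ signing convention as $\bx_R$, with zeros off the edges of $C$. A direct check shows $\by_C$ satisfies \eqref{eq:flowPreserved} with right-hand side $0$ and respects every sign constraint (a forward edge only appears in $\hat{G}$ as $i\to j$, forcing $+1$; a backward edge only as $j\to i$, forcing $-1$; bidirectional edges are free), so $\by_C$ lies in the recession cone of $\calF_{G}$. Choosing any $\bx\in\calF_{G}$ — nonempty under the standing assumption that every edge of $G$ lies in some route — the ray $\{\bx+t\by_C : t\ge 0\}\subseteq\calF_{G}$ witnesses unboundedness. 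The main obstacle I foresee is the bookkeeping for bidirectional edges in the decomposition step: one must confirm that $\hat{G}_{\bx}$ never uses both orientations of a single bidirectional edge (guaranteed because the sign of $x_e$ selects a unique orientation) and that acyclicity of $\hat{G}$ transfers to $\hat{G}_{\bx}$ despite this collapsing, so that the classical acyclic flow decomposition legitimately applies.
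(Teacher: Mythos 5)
Your overall strategy is sound and takes a genuinely different route from the paper: rather than proving the decomposition by hand (the paper iteratively subtracts $\lambda_i\bx_{R_i}$ from an arbitrary $\bu$-flow and argues the remainder must be zero), you construct the auxiliary digraph $\hat G_{\bx}$ and invoke the classical acyclic path-decomposition theorem for $s$-$t$ flows as a black box. That is legitimate and more economical than the paper's self-contained argument, and your checks of the inclusions and of the recession-cone vector $\by_C$ are all in order.

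Two things need repair, though. First, your working definition of acyclic — ``$\hat G$ has no directed cycle'' — cannot be taken literally, because $\hat G$ \emph{always} has a $2$-cycle through any bidirectional edge; under that reading the hypothesis would rule out precisely the graphs $G_B(\nu)$ this paper cares about. The correct notion is that $\hat G$ has no directed cycle that uses a given $G$-edge at most once (equivalently, $G$ admits a topological order of its vertices), and with that fix your claim that $\hat G_{\bx}$ is acyclic does follow — but not merely ``as a subgraph of the acyclic $\hat G$''; you have to use that $\hat G_{\bx}$ keeps at most one orientation of each $G$-edge, which is exactly what the corrected notion of acyclicity protects against. You flag this tension in your last paragraph but never actually resolve it; it needs to be stated and used precisely. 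Second, you establish the set identity $\calF_G=\conv\{\bx_R\}$ but never address whether each $\bx_R$ is in fact a vertex of $\calF_G$, which the paper explicitly treats as part of the claim (``it remains to show that every route in $\calR$ corresponds to a vertex'') and proves by showing no three of the $\bx_R$ are collinear. To fully match the proposition as the paper interprets it, you should add an argument — for instance, that the $\bx_R$ are pairwise-distinct $\{0,\pm1\}$-vectors each uniquely maximizing a suitable linear functional, or reproduce the paper's collinearity check — ensuring none of the $\bx_R$ is redundant in the convex hull.
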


\begin{proof}
First, if $G$ is not acyclic, then it has a cycle $C$. 
Given any unit flow, one can arbitrarily change the flow in $C$ while preserving the conservation of flow condition~(\ref{eq:flowPreserved}).
Thus the set of $\bu$-flows is not bounded and hence not a polytope. 

If $G$ is acyclic, we show that $\calF_{G} = \mathrm{conv}\{\bx_R\mid R\in \calR\}$. 
We label the routes in $\calR$ such that $\calR = \{R_1,\ldots, R_k\}$ for some positive integer $k$, and we let $\bx_i$ denote the signed characteristic vector for $R_i$. 
Letting $\bp \in \mathrm{conv}\{\bx_R\mid R\in \calR\}$, we can now write $\bp = \sum_{i=1}^k \lambda_i \bx_i$ where $\lambda_i \in \bbR_{\geq 0}$ and $\sum_{i=1}^k \lambda_i = 1$. 
The netflow at vertex $1$ in $G$ is given by the sum $\sum \lambda_i $ where the sum is taken over only the indices of routes using an edge in $L(1)$. 
Since every route in $\calR$ must begin with an edge in $L(1)$, the netflow at vertex $1$ is $\sum_{i=1}^k \lambda_i= 1$. 
Similarly we obtain that the netflow at vertex $n$ is $-1$. 
Furthermore, since each $\bx_i$ adds no netflow at any inner vertex we see that any linear combination $\sum_{i=1}^k \lambda_i \bx_i$ also has netflow $0$ at each inner vertex. 
Thus $\bp \in \calF_{G}$.

Let $\bq_1$ be a $\bu$-flow in $\calF_{G}$, and let $q_{1,e}$ denote the coordinate of $\bq_1$ corresponding to the edge $e$. 
We construct $\bq_1$ as a linear combination of routes in an iterative fashion. 
Consider an edge $e_1\in R_1$ such that $\abs{q_{1,e_1}}$ is minimal, and let $\lambda_1 = \abs{q_{1,e_1}}$. 
Then $\bq_1 - \lambda_1\bx_1$ is a $(1-\lambda_1)\bu$-flow. 
Let $\bq_2 = \bq_1 - \lambda_1\bx_1$ and consider an edge $e_2\in R_2$ such that $\abs{q_{2,e_2}}$ is minimal and let $\lambda_2 = \abs{q_{2,e_2}}$. 
We obtain that $\bq_1- \lambda_1\bx_1 - \lambda_2\bx_2$ is a $(1-\lambda_1-\lambda_2)\bu$-flow. 
We continue this process, constructing a $(1-\sum_{i=1}^j\lambda_i)\bu$-flow $\bq_j$ for each $R_j \in \calR$ by removing the maximal amount of flow along the route $R_j$.
At each step the netflow of inner vertices is unaffected, while the netflow at the source decreases by $\lambda_i$ and the netflow at the sink increases by $\lambda_i$.
We continue this process until we obtain that $\bq_1 - \sum_{i=1}^k \lambda_i \bx_i$ is a $(1-\sum_{i=1}^k\lambda_i)\bu$-flow. 
If there remains an edge $e$ in $G$ with non-zero flow in the $(1-\sum_{i=1}^k\lambda_i)\bu$-flow, then by conservation of flow $e$ is either in a cycle or a route. 
Since $G$ is acyclic, $e$ must be in a route $R_j$ with non-zero flow in the $(1-\sum_{i=1}^k\lambda_i)\bu$-flow on $G$. 
This contradicts the fact that the largest possible flow in $R_j$ was subtracted at step $j$. 
Thus all edges in an $(1-\sum_{i=1}^k\lambda_i)\bu$-flow
on $G$ are zero.
It then follows that $\bq_1 = \sum_{i=1}^k \lambda_i\bx_i$ and $\sum_{i=1}^k  \lambda_i = 1$, and therefore $\calF_{G} = \mathrm{conv}\{\bx_R \mid R\in \calR\}$. 

It remains to show that $\bx_R$ is a vertex of $\calF_{G}$ for each $R\in \calR$.
It suffices to show that no three vectors in $\{\bx_R\mid R\in \calR\}$ are colinear. 
Suppose toward a contradiction that there are distinct $u$, $v$, and $w$ in $\{1, \ldots, k\}$ such that $\bx_u$, $\bx_v$ and $\bx_w$ are colinear. 
Then $\bx_u-\bx_w = c(\bx_v - \bx_w)$ for some $c\in \bbR$, and so $\bx_u = c\bx_v + (1- c)\bx_w$.
Since $\bx_u$, $\bx_v$, and $\bx_w$ are distinct, there is a first edge $e$ along $R_v$ that is not in $R_w$. 
Hence $x_{v,e} = \pm 1$ and $x_{w,e} = 0$.
Thus $x_{u,e} = cx_{v,e} = \pm \, c\, $, where $x_{u,e} \in \{-1,0,1\}$. 
If $x_{u,e} = 0$, then $c=0$, which implies that $\bx_u = \bx_w$ and hence contradicts the fact that the vectors are distinct. 
Similarly, if $x_{u,e} = 1$, then $c =1$ and we reach the contradiction that $\bx_u = \bx_v$. 
If $c = -1$, then $\bx_u + \bx_v = 2\bx_w$. 
Since $R_u$ and $R_v$ are distinct, there is a first edge $e'$ in $R_u$ that is not in $R_v$. 
Therefore $x_{u,e'}= 1$ and $x_{v,e'} = 0$. 
It follows that $1 = 2x_{w,e'}$, which is a contradiction since $x_{w,e'}$ must be an integer. 
\end{proof}

Having now shown that our notion of a flow polytope is well-defined, we can make explicit
its connection to the usual definition where all edges in the underlying graph are assumed to be forward edges.
Two lattice polytopes $\calP\subseteq \bbR^k$ and $\calQ\subseteq \bbR^\ell$ are said to be \textbf{integrally equivalent} if there exists an affine transformation $\varphi:\bbR^k\rightarrow \bbR^\ell$ whose restriction to $\calP$ is a bijection $\varphi:\calP\to \calQ$ that preserves the lattice, meaning $\varphi$ is a bijection between $\bbZ^k \cap \mathrm{aff}(P)$ and $\bbZ^\ell \cap \mathrm{aff}(Q)$.
If $\calP$ and $\calQ$ are integrally equivalent, they have the same face poset, normalized volume and Ehrhart polynomial, and we write $\calP\equiv\calQ$.

\begin{lemma}
Let $G$ be an acyclic digraph. Then $\calF_G$ is integrally equivalent to a flow polytope $\calF_{G^*}$, where all edges of $G^*$ are forward edges.
\label{lem:intEq}
\end{lemma}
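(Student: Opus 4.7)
The key observation behind my plan is Proposition~\ref{prop:GBpolytope}: every vertex of $\calF_G$ lies in $\{-1, 0, +1\}^m$, so each coordinate $x_e$ is confined to $[0,1]$, $[-1,0]$, or $[-1,1]$ according as $e$ is forward, backward, or bidirectional. I would therefore define a lattice-preserving affine bijection $\varphi \colon \R^m \to \R^m$ coordinatewise by $y_e = x_e$ for forward edges, $y_e = -x_e$ for backward edges, and $y_e = x_e + 1$ for bidirectional edges. Each of these coordinate changes is an integer-to-integer affine bijection on $\R$, so $\varphi$ is lattice-preserving, and under $\varphi$ every edge-type inequality ($x_e \ge 0$, $x_e \le 0$, or $x_e \in \R$) becomes the uniform forward constraint $y_e \ge 0$.

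Next I would identify the image $\varphi(\calF_G)$ as the flow polytope of a suitable graph $G^*$ with only forward edges. Substituting the above into the conservation equations of $G$ produces equations of the form $\sum_{e \in L(v)} y_e - \sum_{e \in S(v)} y_e = a_v^*$, where the new netflow $\ba^*$ differs from $\bu$ only by $\pm 1$ shifts at the endpoints of each bidirectional edge. The graph $G^*$ is built from $G$ by reversing each backward edge (so that the new $y_e$ represents forward flow in the reversed direction) and by orienting each bidirectional edge as a forward edge; a topological sort, which exists by the acyclicity of $G$, then relabels the vertices so that every edge runs from a smaller to a larger label. Finally, to convert the netflow vector $\ba^*$ into the standard source--sink pattern $\bu = \be_1 - \be_{n^*}$, I would augment $G^*$ by a super-source and super-sink together with auxiliary forward ``bridge'' edges whose flow values are uniquely determined by the new conservation equations. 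Composing $\varphi$ with the natural inclusion into the higher-dimensional ambient space of the augmented graph yields a lattice-preserving affine bijection $\calF_G \to \calF_{G^*}$.

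The main obstacle is verifying the consistency of $G^*$: first, that $G^*$ is acyclic (so that $\calF_{G^*}$ is indeed a polytope), and second, that the netflow-normalization step produces a genuine $\bu$-flow polytope integrally equivalent to $\varphi(\calF_G)$. Acyclicity of $G^*$ follows by a lifting argument: any directed cycle in $G^*$ would yield a flow cycle in $G$, contradicting the acyclicity hypothesis invoked in Proposition~\ref{prop:GBpolytope}. The normalization step is more delicate, as it requires the auxiliary edges to absorb exactly the netflow imbalances at the endpoints of bidirectional edges while contributing no new degrees of freedom; this can be arranged by making each auxiliary edge a bridge whose flow is pinned by the conservation equations once the non-auxiliary coordinates are fixed. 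With these two points checked, the integral equivalence $\calF_G \equiv \calF_{G^*}$ follows.
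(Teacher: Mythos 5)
Your treatment of backward edges (the reflection $y_e = -x_e$) coincides with the paper's first step, so that part is fine. The gap is in the treatment of bidirectional edges. The shift $y_e = x_e + 1$ does make $y_e \geq 0$, but it also changes the conservation equations: if $e = (i,j,\pm)$ is shifted, the effective netflow becomes $a_i^* = a_i + 1$ and $a_j^* = a_j - 1$. So $\varphi(\calF_G)$ is a flow polytope with netflow $\ba^*$, not with the required source–sink vector $\bu = \be_1 - \be_n$. Flow polytopes with different netflows over the same or related graphs are generally \emph{not} integrally equivalent — they can have different dimensions — so the reduction back to $\bu$-form cannot be waved through.

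Your proposed fix, adding a super-source, super-sink, and ``bridge'' edges whose flows are pinned, does not resolve this. For a bridge edge's flow to be pinned by conservation, the auxiliary vertex at its far end must have degree one and carry a nonzero netflow — which means the resulting polytope is again not of $\bu$-form; you have merely relocated the nonzero netflows. If instead you route the bridges through a single super-source $s$ and super-sink $t$ with netflow at $s$ and $t$ only, the flows on the bridge edges are \emph{not} determined: the flow leaving $s$ can split among the bridges in many ways, so the augmented flow polytope has strictly larger dimension than $\calF_G$ whenever there is more than one bridge on either side. A dimension count makes this concrete: $\calF_G$ has dimension $m - n + 1$, while the $\bu$-flow polytope of the augmented graph with $k$ added bridges and two added vertices has dimension $m + k - n - 1$; these agree only for $k = 2$, which generically is not enough bridges to absorb the netflow shifts, especially since the lemma is for arbitrary acyclic $G$, where bidirectional edges need not form a path and the shifted netflows $\ba^*$ can be nonzero at many vertices with arbitrary multiplicities.

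The paper sidesteps this entirely by \emph{contracting} each bidirectional edge rather than shifting its coordinate. Contraction deletes one edge and merges two vertices, so $m$ and $n$ both drop by one, the dimension $m - n + 1$ is preserved, and the netflow vector is untouched. The projection $\varphi \colon \R^m \to \R^{m-1}$ that forgets the bidirectional coordinate is a bijection onto $\calF_{G/(i,j,\pm)}$ because conservation of flow at the vertex $i$ uniquely determines the deleted coordinate from the remaining ones; this is precisely the inverse map $\psi$ in the paper. If you want to salvage your shift-based idea, you would need a concrete recipe for the normalization that is verified to preserve both dimension and lattice, and you would need to handle the case of arbitrary configurations of bidirectional edges; as written, that step does not go through.
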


\begin{proof}
Since $G$ is acyclic, the vertex labels can be permuted such that $G$ has no backward edges. We let $G^*$ be the graph obtained from $G$ by contracting all bidirectional edges and then relabeling so that there are no backward edges. 
As a result, $G^*$ only has forward edges, and it remains to show that the operations performed are integral equivalences.
Changing the orientation of a backward edge $e$ in $G$ amounts to reflecting the flow polytope about the hyperplane $x_e=0$, which is an integral equivalence.
Next, consider the operation of contracting a bidirectional edge $(i,j,\pm)$ with $i<j$, where $(i,j,\pm)$ is the $k$th edge in a fixed ordering of the edges of $G$.
Define the map $\varphi:\bbR^m \to \bbR^{m-1}$ by 
$$\varphi(x_1,\ldots, x_k, \ldots, x_m) = (x_1,\ldots, \widehat{x}_k, \ldots, x_m),$$
and the map $\psi:\bbR^{m-1}\to \bbR^m$ by 
$$\psi(x_1,\ldots, \widehat{x}_k,\ldots, x_m)= (x_1,\ldots, x_{k-1},\sum_{e\in L(i)\setminus (i,j,\pm)}x_e - \sum_{e\in S(i)} x_e, x_{k+1}, \ldots, x_m).$$
Now restricting $\varphi$ to $\calF_{G}$ gives a bijection between $\calF_{G}$ and $\calF_{G/(i,j,\pm)}$, with its inverse given by restricing $\psi$ to $\calF_{G/(i,j,\pm)}$. 
It is straightforward to verify that $\varphi$ restricts to a bijection between $\mathbb{Z}^m \cap \mathrm{aff}(\calF_{G})$ and $\mathbb{Z}^{m-1} \cap \mathrm{aff}(\calF_{G/(i,j,\pm)})$.
\end{proof}

An immediate consequence of Lemma~\ref{lem:intEq} is that the dimension formula $\dim(\calF_G) = \abs{E(G)} - \abs{V(G)} + 1$ for flow polytopes (see for example \cite[Proposition 1]{DKK12}) also holds for our definition of flow polytope. 

An edge $(i,j)$ is said to be \textbf{idle} if it is the only outgoing edge from $i$, or it is the only incoming edge to $j$.
The following is another useful operation that preserves flow polytopes up to integral equivalence.

\begin{lemma}(\cite[Lemma 2.2]{MS17})
\label{lem:idleEdgeContraction}
If $G'$ is obtained from $G$ by contracting idle edges, then $\calF_{G'} \equiv \calF_{G}$. \qed
\end{lemma}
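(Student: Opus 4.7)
My plan is to prove integral equivalence by projecting out the coordinate corresponding to the idle edge, mirroring the structure of Lemma~\ref{lem:intEq}. By symmetry, I would treat only the case where $(i,j)$ is idle because $L(i) = \{(i,j)\}$; the case $S(j) = \{(i,j)\}$ is analogous after swapping the roles of $i$ and $j$. If $(i,j)$ is the $k$-th edge in a fixed ordering of $E(G)$, I define the projection $\varphi : \R^m \to \R^{m-1}$ that forgets the $k$-th coordinate. The flow conservation equation (\ref{eq:flowPreserved}) at $i$ forces
\[
x_{(i,j)} = a_i + \sum_{e \in S(i)} x_e
\]
on any $\bu$-flow of $G$, which motivates defining a candidate inverse $\psi : \R^{m-1} \to \R^m$ that reinserts this value in the $k$-th slot. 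Both maps are affine with integer coefficients.

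First I would verify that $\varphi$ sends $\calF_G$ into $\calF_{G'}$: the remaining coordinates satisfy flow conservation at every vertex of $G'$, because at vertices inherited from $G$ the equation is unchanged, while at the merged vertex $v$ (identifying $i$ with $j$) the equation is the sum of those at $i$ and $j$ in $G$, with the two contributions of $x_{(i,j)}$ canceling and the netflow reading $a_i + a_j$ as required. Sign constraints on surviving edges carry over unchanged.

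The main obstacle is showing that $\psi$ lands in $\calF_G$, since the sign constraint on $x_{(i,j)}$ (determined by its type as a forward, backward, or bidirectional edge) is not obviously compatible with the affine formula above. Rather than verifying it directly, I would invoke the route description from Proposition~\ref{prop:GBpolytope}: because $(i,j)$ is the sole edge in $L(i)$, every route of $G$ passing through $i$ must continue along $(i,j)$, so edge contraction induces a bijection between the route set $\calR$ of $G$ and the route set $\calR'$ of $G'$ under which $\varphi$ maps signed characteristic vectors bijectively. Thus $\varphi$ sends vertices of $\calF_G$ to vertices of $\calF_{G'}$ bijectively and extends uniquely to an affine bijection between the polytopes, whose inverse agrees with $\psi$ on the vertex set and hence, by affinity, everywhere. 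Since both $\varphi$ and $\psi$ have integer matrix entries, they restrict to mutually inverse lattice bijections, establishing $\calF_G \equiv \calF_{G'}$.
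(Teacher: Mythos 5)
The paper does not prove this lemma; it cites \cite[Lemma 2.2]{MS17} and \cite[Corollary 2.13]{DHMM21} without reproducing an argument, so there is no internal proof to compare your attempt against. Your proposal is the natural and essentially correct argument: project out the coordinate of the idle edge, reinsert it via the flow-conservation identity, and check that the two affine integer maps are mutually inverse on the polytopes and their affine lattices.

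One remark: the detour through Proposition~\ref{prop:GBpolytope} is not actually needed to verify that $\psi$ lands in $\calF_G$. In the all-forward-edge setting in which the cited references work (and in which ``only outgoing edge'' and ``$L(i)=\{(i,j)\}$'' coincide), the sign constraint is immediate: $x_{(i,j)} = a_i + \sum_{e\in S(i)} x_e \ge 0$ since $a_i = 0$ for inner $i$ (or $a_1 = 1$) and $x_e \ge 0$ for all $e\in S(i)$. The route-bijection argument is a perfectly good alternative, but do be careful that in the paper's extended setting with backward or bidirectional edges, $L(i)=\{(i,j)\}$ no longer implies that every route through $i$ exits via $(i,j)$ (a route could leave $i$ by traversing a backward or bidirectional edge in $S(i)$), so the bijection claim as stated would require the additional hypothesis that all edges incident to $i$ other than $(i,j)$ are forward. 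Finally, your verification that the merged-vertex flow equation is the sum of those at $i$ and $j$ implicitly assumes that after contraction the edges of $S(i)$ and $S(j)\setminus\{(i,j)\}$ all land in $S(v)$ and those of $L(j)$ in $L(v)$; this is automatic when $i$ and $j$ are consecutive (as in the paper's applications) or when the graph is non-crossing, but is worth flagging for a general idle edge.
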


\section{The flow polytope $\calF_{\widehat{G}_B(\nu)}$ and its simplex subdivision}
\label{sec:3}

In this section we use a lattice path $\nu$ to construct the main flow polytope of consideration in this article. We show that the path $\nu$ also induces a polyhedral subdivision of the flow polytope that is dual to a simplex, with each interior faces in the subdivision being integrally equivalent to a flow polytope.

\subsection{The flow polytope $\calF_{\widehat{G}_B(\nu)}$}
Let $a$ and $b$ be nonnegative integers, and let $\nu$ be a lattice path from $(0,0)$ to $(a,b)$, with steps $E=(1,0)$ and $N = (0,1)$. 
Let $\overline{\nu} := E\nu N$, and consider it as a word on $a+b+2$ letters in the alphabet $\{E,N\}$. We write $\overline{\nu} =\prod_{i=1}^{w} E^{a_i}N^{b_i}$, where $a_i>0$ and $b_i>0$ for $1\leq i\leq w$, where $w$ is the number of valleys (consecutive $EN$ pairs) in $\overline{\nu}$.
To distinguish between the $E$ steps and $N$ steps in $\overline{\nu}$, we index them as follows.
Reading the letters in $\overline{\nu}$ from left to right, each letter is indexed with the next positive integer, with the exception that the first $N$ in a run of $N$ steps receives the index of the preceding $E$ step. 
We refer to such an indexing as the \textbf{canonical indexing of} $\overline{\nu}$. In addition, we refer to any indexing of $\overline{\nu}$ obtained by cyclically permuting the labels in the canonical indexing as a \textbf{valid indexing}.
For example, $E_1N_1E_2E_3N_3N_4E_5N_5$ is a canonical indexing, while $E_5N_5E_1E_2N_2N_3E_4N_4$ is a valid indexing of the same path. 
Let $I$ be the set of indices of $E$ steps and let $J$ be the set of indices of $N$ steps.
We have that $\abs{I} = a+1 = \sum_{i=1}^w a_i$ and $\abs{J} = b+1 = \sum_{i=1}^w b_i$.
Furthermore, the valleys of $\overline{\nu}$ are indexed by elements in $I \cap J$.

\begin{definition}
For a path $\overline{\nu}$ with its letters indexed by $\{1,...,n\}$ according to a valid indexing, we define the \textbf{$\nu$-graph}, denoted $G(\nu)$, to be the digraph on vertex set $[n]$ and edge set defined as follows.
A pair $(i,j)$ is a directed edge in $G(\nu)$ toward $j$ if and only if  $E_i\cdots N_j$ is a subword of $\overline{\nu}$ and its only valleys are of the form $E_kN_k$ with $k=i$ or $k=j$. 
The \textbf{bidirectional $\nu$-graph} $G_B(\nu)$ is the graph $G(\nu)$, but with edges $(i,j)$ being bidirectional whenever $i$ and $j$ are both in $I\cap J$. 
\end{definition}

By construction, the bidirectional edges in $G_B(\nu)$ form a path of length $w-1$. 
Note that when $\overline{\nu}$ has only one valley, there are no edges whose end points are in $I\cap J$, and so $G_B(\nu) = G(\nu)$. 
The number of edges in $G(\nu)$ is $\abs{I\cup J}-1$, as it contains an edge for each element of $I\setminus J$, an edge for each element of $J\setminus I$, and the edges on the path through vertices in $I\cap J$.

\begin{center}
\begin{figure}
\captionsetup{justification=centering}
\begin{tikzpicture}
\begin{scope}[xshift=0, yshift=-10, scale=0.7]
	\vertex[fill,label=below:\small{$1$}](a1) at (1,0) {};
	\vertex[fill,label=below:\small{$2$}](a2) at (2,0) {};
	\vertex[fill,label=below:\small{$3$}](a3) at (3,0) {};
	\vertex[fill,label=below:\small{$4$}](a4) at (4,0) {};
	\vertex[fill,label=below:\small{$5$}](a5) at (5,0) {};
	\vertex[fill,label=below:\small{$6$}](a6) at (6,0) {};

	\draw[>=stealth', ->, very thick] (a2)--(a3);
	\draw[>=stealth', ->, very thick] (a3)--(a4);
	\draw[>=stealth', ->, very thick] (a5)--(a6);
	\draw[>=stealth', <->, very thick] (a1) .. controls (1.45, 0.8) and (2.55, 0.8) .. (a3);
	\draw[>=stealth', <->, very thick] (a3) .. controls (3.45, 1) and (5.55, 1) .. (a6);	
\end{scope}
\begin{scope}[xshift=210, yshift=-10, scale=0.8]
	\vertex[fill,label=below:\small{$s$}](as) at (0,0) {};
	\vertex[fill,label=below:\small{$1$}](a1) at (1,0) {};
	\vertex[fill,label=below:\small{$2$}](a2) at (2,0) {};
	\vertex[fill,label=below:\small{$3$}](a3) at (3,0) {};
	\vertex[fill,label=below:\small{$4$}](a4) at (4,0) {};
	\vertex[fill,label=below:\small{$5$}](a5) at (5,0) {};
	\vertex[fill,label=below:\small{$6$}](a6) at (6,0) {};
	\vertex[fill,label=below:\small{$t$}](at) at (7,0) {};

	\draw[>=stealth', ->, very thick] (a2)--(a3);
	\draw[>=stealth', ->, very thick] (a3)--(a4);
	\draw[>=stealth', ->, very thick] (a5)--(a6);
	\draw[>=stealth', <->, very thick] (a1) .. controls (1.45, 0.8) and (2.55, 0.8) .. (a3);
	\draw[>=stealth', <->, very thick] (a3) .. controls (3.45, 1) and (5.55, 1) .. (a6);
	\draw[>=stealth',->,color=gray] (as)--(a1);	
	\draw[>=stealth',->,color=gray] (as) .. controls (0.25, 0.75) and (1.75, 0.75) .. (a2);
	\draw[>=stealth',->,color=gray] (as) .. controls (0.25, 1.5) and (2.75, 1.5) .. (a3);
	\draw[>=stealth',->,color=gray] (as) .. controls (0.25, 2.25) and (4.75, 2.25) .. (a5);	\draw[>=stealth',->,color=gray] (as) .. controls (0.25, 3.0) and (5.75, 3.0) .. (a6);
	
	\draw[>=stealth',->,color=gray] (a6)--(at);	
	\draw[>=stealth',->,color=gray] (a1) .. controls (1.25, -3) and (6.75, -3) .. (at);
	\draw[>=stealth',->,color=gray] (a3) .. controls (3.25, -2.25) and (6.75, -2.25) .. (at);
	\draw[>=stealth',->,color=gray] (a4) .. controls (4.25, -1.5) and (6.75, -1.5) .. (at);	
\end{scope}

\begin{scope}[xshift=70, yshift=-40, scale=0.8]
	\node[] (hi) at (0,0) {$\overline{\nu} = E_1N_1E_2E_3N_3N_4E_5E_6N_6$};
\end{scope}

\begin{scope}[xshift=7, yshift=20, scale=0.8]
	\node[] (hi) at (0,0) {$G_B(\nu)$};
\end{scope}

\begin{scope}[xshift=195, yshift=20, scale=0.8]
	\node[] (hi) at (0,0) {$\widehat{G}_B(\nu)$};
\end{scope}
\end{tikzpicture}
\caption{The graphs $G_B(\nu)$ and $\widehat{G}_B(\nu)$ for the path $\nu = NEENNEE$} 
\label{fig:G(nu)}
\end{figure}
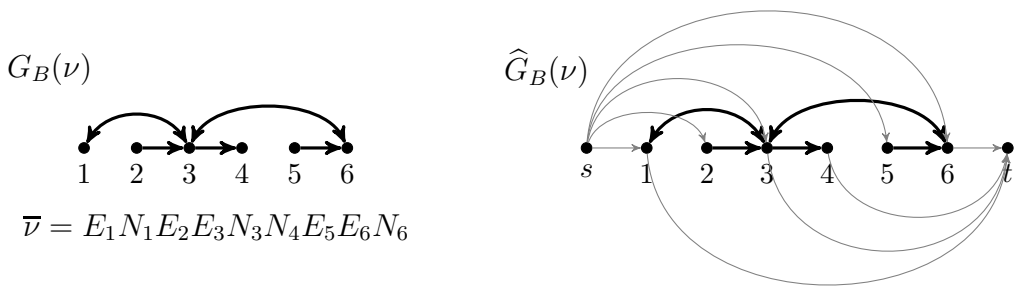
\end{center}

\begin{definition}
For a digraph $G$ on the vertex set $[n]$, define its \textbf{full augmentation} to be the graph $\widetilde{G}$ obtained by adding to $G$ a source $s$ and sink $t$ along with the set of forward edges $\{(s,i,+), (i,t,+)\mid i\in [n]\}$.
If $G$ is a digraph on the vertex set $I\cup J$, where $I$ and $J$ are respectively the index sets of the $E$ and $N$ steps of a canonically indexed $\overline{\nu}$, then we define the \textbf{$\nu$-augmentation} of $G$ to be the graph $\widehat{G}^\nu$ obtained by adding to $G$ a source $s$ and sink $t$, along with the edges $\{(s,i,+) \mid i \in I\}$ and $\{(j,t,+) \mid j\in J\}$. 
When $\nu$ is clear from context, we simply write $\widehat{G}$ for the $\nu$-augmentation of $G$. 
\end{definition}

An example of a graph $G_B(\nu)$ and its $\nu$-augmentation $\widehat{G}_B(\nu)$ are given in Figure~\ref{fig:G(nu)}.

\begin{lemma}
\label{lem:intEq2}
Let $\nu$ be a lattice path from $(0,0)$ to $(a,b)$. The flow polytope $\calF_{\widehat{G}_B(\nu)}$ is integrally equivalent to the product of simplices $\Delta_a \times \Delta_b$. 
\end{lemma}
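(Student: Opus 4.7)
My plan is to reduce $\widetilde{G}_B(\nu)$ to $G_{a,b}$ through a chain of graph contractions, each of which preserves integral equivalence of the associated flow polytope. Since $\calF_{G_{a,b}} = \Delta_a \times \Delta_b$ as noted in the introduction, this chain will yield $\calF_{\widetilde{G}_B(\nu)} \equiv \Delta_a \times \Delta_b$.

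First I would contract all bidirectional edges of $\widetilde{G}_B(\nu)$; by the proof of Lemma~\ref{lem:intEq} each such contraction is an integral equivalence. By the construction of $G_B(\nu)$, bidirectional edges occur only between consecutive valleys, so together they form a single path through the valley set $V = I \cap J$, and contracting them all identifies the $w$ valleys into a single hub vertex $v^*$. I would then verify that no forward edge of $G_B(\nu)$ can join two non-valley vertices: any subword $E_i \cdots N_j$ of $\overline{\nu}$ with $i \in I \setminus V$ and $j \in J \setminus V$ must traverse at least one interior valley $v \neq i, j$, violating the edge condition. Hence every surviving forward edge runs either from a vertex of $I \setminus V$ into $v^*$ or from $v^*$ into a vertex of $J \setminus V$.

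Second, I would apply the idle edge contraction lemma to strip out the remaining intermediate vertices. Each $i \in I \setminus V$ has no incoming edges in $G_B(\nu)$ (since $i \notin J$) and a unique outgoing edge to the valley ending its $E$-run (now identified with $v^*$); combined with the source edge $(s, i)$ supplied by partial augmentation, both edges incident to $i$ are idle. Contracting one of them eliminates $i$ and produces one additional parallel edge $s \to v^*$. A symmetric argument applies to each $j \in J \setminus V$, producing one additional parallel edge $v^* \to t$.

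A direct count then finishes the proof: the parallel edges $s \to v^*$ total $|V| + |I \setminus V| = |I| = a+1$, combining valley source edges preserved by partial augmentation with those arising from contracting non-valley $E$-branches, and symmetrically $v^* \to t$ carries $|J| = b+1$ parallel edges. The resulting graph is therefore precisely $G_{a,b}$. The most delicate step is verifying the partial-augmentation outcome---that source edges are retained at every valley and every non-valley $E$-vertex, and symmetrically for sinks---so that the counts come out cleanly to $|I|$ and $|J|$; this requires computing in- and out-degrees in the full augmentation while treating each bidirectional edge as contributing to both.
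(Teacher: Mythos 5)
Your proof is correct and takes essentially the same approach as the paper's, which observes that every inner edge of $\widetilde{G}_B(\nu)$ is idle or bidirectional and contracts them all to reach $G_{a,b}$. You flesh out the details the paper omits (no forward edge joins two non-valleys, each $i\in I\setminus V$ is pinched between two idle edges, and the counts yield $|I|=a+1$ and $|J|=b+1$), and you correctly flag the partial-augmentation bookkeeping as the delicate point.
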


\begin{proof}
By construction, an inner edge of $\widehat{G}_B(\nu)$ is either idle or bidirectional. 
Thus contracting the idle inner edges and bidirectional edges yields the graph $G_{a,b}$ on vertex set $\{s,1,t\}$ with $\abs{I} = a+1$ edges of the form $(s,1,+)$ and $\abs{J} = b+1$ edges of the form $(1,t,+)$. 
By Lemma~\ref{lem:intEq} and Lemma~\ref{lem:idleEdgeContraction} these contractions preserve the flow polytopes up to integral equivalence.
Hence $\calF_{\widehat{G}_B(\nu)} \equiv \calF_{G_{a,b}} = \Delta_a\times \Delta_b$. 
\end{proof}

\subsection{The simplex-subdvision of $\calF_{\widehat{G}_B(\nu)}$}
Define a \textbf{cyclic peak} of a lattice path to be a consecutive $NE$ pair in a cyclic reading of the steps in the path. 
In particular, the only cyclic peak which is not a peak occurs when the path begins with $E$ and ends with $N$.
Given a path $\nu$, we define $\overline{\nu}(i)$ to be the path obtained by reading the steps of $\overline{\nu}$ beginning at the $E$ step of the $i$th cyclic peak.
We begin counting from the first cyclic peak which is also a peak of $\overline{\nu}$. 
Since $\overline{\nu}$ begins with $E$ and ends with $N$, the number of cyclic peaks is the same as the number of valleys $w$ in $\overline{\nu}$, with the first and last steps of $\overline{\nu}$ forming the $w$th cyclic peak. 
For example, the path $\overline{\nu} = E_1N_1E_2N_2E_3E_4N_4E_5N_5$ has four cyclic peaks, and 
\begin{align*}
\overline{\nu}(1) &= E_2N_2E_3E_4N_4E_5N_5E_1N_1, \\
\overline{\nu}(2) &= E_3E_4N_4E_5N_5E_1N_1E_2N_2, \\
\overline{\nu}(3) &= E_5N_5E_1N_1E_2N_2E_3E_4N_4,\\
\overline{\nu} = \overline{\nu}(4) &= E_1N_1E_2N_2E_3E_4N_4E_5N_5.
\end{align*}
The labeling of each cyclically shifted path $\overline{\nu}(i)$ is a valid labeling, and hence induces a graph $G(\nu(i))$, where $\nu(i)$ denotes $\overline{\nu}(i)$ without the initial $E$ step and terminal $N$ step. 
Note that different valid indexings of $\overline{\nu}(i)$ give rise to different cyclic permutations of the vertices of $G(\nu(i))$. 
In particular, we have the following.

\begin{lemma}
Let $p_i$ be the index of the $N$ step in the $i$th cyclic peak of $\overline{\nu}$. 
The graph obtained by the relabeling of the vertices of $G(\nu(i))$ by $\sigma_i:k\mapsto k-p_i \pmod n$ has only forward edges.
\label{lem:sigma}
\end{lemma}

\begin{proof}
Applying $\sigma_i:k\mapsto k-p_i \pmod n$ to the indices of $\overline{\nu}(i)$ give its canonical indexing. The edges of $G(\nu)$ are forward edges for any canonically indexed $\overline{\nu}$. 
\end{proof}

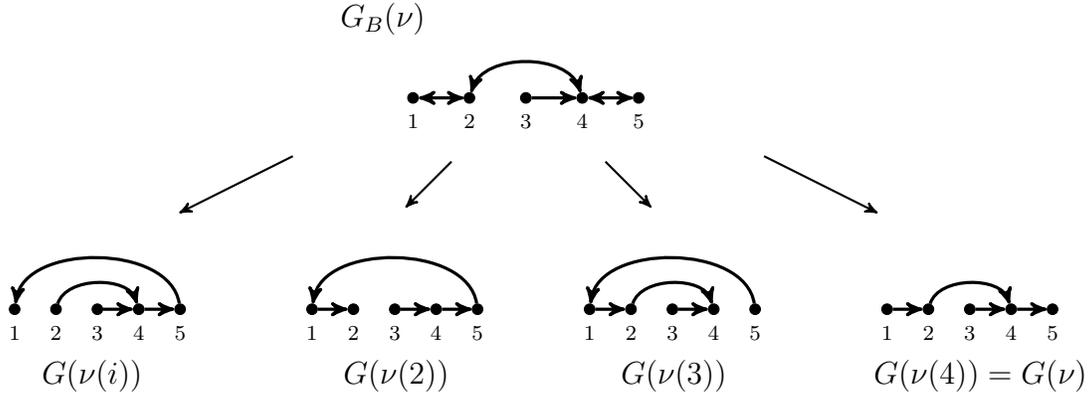
\begin{figure}
\begin{center}
\begin{tikzpicture}
\begin{scope}[xshift=-20, yshift=0, scale=0.75]
	\vertex[fill,label=below:\tiny{$1$}](a1) at (1,0) {};
	\vertex[fill,label=below:\tiny{$2$}](a2) at (2,0) {};
	\vertex[fill,label=below:\tiny{$3$}](a3) at (3,0) {};
	\vertex[fill,label=below:\tiny{$4$}](a4) at (4,0) {};
	\vertex[fill,label=below:\tiny{$5$}](a5) at (5,0) {};

	\draw[>=stealth', <->, very thick] (a1)--(a2);
	\draw[>=stealth', <->, very thick] (a2) .. controls (2.25, 0.8) and (3.75, 0.8) .. (a4);
	\draw[>=stealth', ->, very thick] (a3)--(a4);
	\draw[>=stealth', <->, very thick] (a4)--(a5);

\end{scope}

\begin{scope}[xshift=165, yshift=-80, scale=0.55]
	\vertex[fill,label=below:\tiny{$1$}](a1) at (1,0) {};
	\vertex[fill,label=below:\tiny{$2$}](a2) at (2,0) {};
	\vertex[fill,label=below:\tiny{$3$}](a3) at (3,0) {};
	\vertex[fill,label=below:\tiny{$4$}](a4) at (4,0) {};
	\vertex[fill,label=below:\tiny{$5$}](a5) at (5,0) {};
	\draw[>=stealth', ->, very thick] (a1)--(a2);
	\draw[>=stealth', ->, very thick] (a2) .. controls (2.25, 0.8) and (3.75, 0.8) .. (a4);
	\draw[>=stealth', ->, very thick] (a3)--(a4);
	\draw[>=stealth', ->, very thick] (a4)--(a5);
\end{scope}

\begin{scope}[xshift=-165, yshift=-80, scale=0.55]
	\vertex[fill,label=below:\tiny{$1$}](a1) at (1,0) {};
	\vertex[fill,label=below:\tiny{$2$}](a2) at (2,0) {};
	\vertex[fill,label=below:\tiny{$3$}](a3) at (3,0) {};
	\vertex[fill,label=below:\tiny{$4$}](a4) at (4,0) {};
	\vertex[fill,label=below:\tiny{$5$}](a5) at (5,0) {};
	\draw[>=stealth', ->, very thick] (a2) .. controls (2.25, 0.8) and (3.75, 0.8) .. (a4);
	\draw[>=stealth', ->, very thick] (a3)--(a4);
	\draw[>=stealth', ->, very thick] (a4)--(a5);
	\draw[>=stealth', <-, very thick] (a1) .. controls (1.25, 1.6) and (4.75, 1.6) .. (a5);
\end{scope}

\begin{scope}[xshift=-52.5, yshift=-80, scale=0.55]
	\vertex[fill,label=below:\tiny{$1$}](a1) at (1,0) {};
	\vertex[fill,label=below:\tiny{$2$}](a2) at (2,0) {};
	\vertex[fill,label=below:\tiny{$3$}](a3) at (3,0) {};
	\vertex[fill,label=below:\tiny{$4$}](a4) at (4,0) {};
	\vertex[fill,label=below:\tiny{$5$}](a5) at (5,0) {};

	\draw[>=stealth', ->, very thick] (a1)--(a2);

	\draw[>=stealth', ->, very thick] (a3)--(a4);
	\draw[>=stealth', ->, very thick] (a4)--(a5);
	\draw[>=stealth', <-, very thick] (a1) .. controls (1.25, 1.6) and (4.75, 1.6) .. (a5);
\end{scope}

\begin{scope}[xshift=52.5, yshift=-80, scale=0.55]
	\vertex[fill,label=below:\tiny{$1$}](a1) at (1,0) {};
	\vertex[fill,label=below:\tiny{$2$}](a2) at (2,0) {};
	\vertex[fill,label=below:\tiny{$3$}](a3) at (3,0) {};
	\vertex[fill,label=below:\tiny{$4$}](a4) at (4,0) {};
	\vertex[fill,label=below:\tiny{$5$}](a5) at (5,0) {};

	\draw[>=stealth', ->, very thick] (a1)--(a2);
	\draw[>=stealth', ->, very thick] (a2) .. controls (2.25, 0.8) and (3.75, 0.8) .. (a4);
	\draw[>=stealth', ->, very thick] (a3)--(a4);
	\draw[>=stealth', <-, very thick] (a1) .. controls (1.25, 1.6) and (4.75, 1.6) .. (a5);
\end{scope}

\begin{scope}[xshift=-10, yshift=30, scale=0.8]
\node[] (a) at (0,0) {$G_B(\nu)$};
\end{scope}

\begin{scope}[xshift=216, yshift=-105, scale=0.8]
\node[] (a) at (0,0) {$G(\nu(4)) = G(\nu)$};
\end{scope}

\begin{scope}[xshift=-120, yshift=-105, scale=0.8]
\node[] (a) at (0,0) {$G(\nu(i))$};
\end{scope}

\begin{scope}[xshift=-05, yshift=-105, scale=0.8]
\node[] (a) at (0,0) {$G(\nu(2))$};
\end{scope}

\begin{scope}[xshift=100, yshift=-105, scale=0.8]
\node[] (a) at (0,0) {$G(\nu(3))$};
\end{scope}

\begin{scope}[xshift=-40, yshift=-20, scale=0.9]
\node[] (a1) at (0,0) {};
\node[] (a2) at (-2,-1) {};
\draw[>=stealth', ->, thick] (a1)--(a2);
\end{scope}
\begin{scope}[xshift=130, yshift=-20, scale=0.9]
\node[] (a1) at (0,0) {};
\node[] (a2) at (2,-1) {};
\draw[>=stealth', ->, thick] (a1)--(a2);
\end{scope}
\begin{scope}[xshift=20, yshift=-20, scale=0.9]
\node[] (a1) at (0,0) {};
\node[] (a2) at (-1,-1) {};
\draw[>=stealth', ->, thick] (a1)--(a2);
\end{scope}
\begin{scope}[xshift=70, yshift=-20, scale=0.9]
\node[] (a1) at (0,0) {};
\node[] (a2) at (1,-1) {};
\draw[>=stealth', ->, thick] (a1)--(a2);
\end{scope}

\end{tikzpicture}
\end{center}
\caption{Generating the possible graphs $G(\nu(i))$ from $G_B(\nu)$ with $\nu = NENEENE$}
\label{fig:carBtoQ}
\end{figure}

The graphs $G(\nu(i))$ can be obtained from $G_B(\nu)$ without reference to $\nu$.
Let $v_1 < v_2 < \cdots < v_w$ be the vertices of the bidirectional path in $G_B(\nu)$. 
Let $C$ denote the cycle graph with edges $\{(v_1,v_2,+)$, $(v_2,v_3,+)$, $\ldots$ , $(v_{w-1},v_{w},+)$, $(v_1,v_w,-)\}$, and let $C(i)$ denote the graph $C$ without the edge directed away from $v_i$. 
The graph $G(\nu(i))$ is then obtained by replacing the bidirectional path in $G_B(\nu)$ with $C(i)$. 
Note in particular that $G(\nu(w)) = G(\nu)$ since $C(w)$ is the directed path from $v_1$ to $v_w$.
See Figure~\ref{fig:carBtoQ} for an example.

Let $v_i\in I\cap J$ be the index of the $i$th valley in the canonical indexing of $\overline{\nu}$. 
For example, the path $\overline{\nu} = \overline{\nu}(4)$ above has $I\cap J = \{1,2,4,5\}$, so $v_1=1$, $v_2=2$, $v_3 = 4$ and $v_4=5$. 
For $1\leq i < w$, let $\calR_i$ denote the set of routes in $\widehat{G}_B(\nu)$ defined as follows. 
A route $R\in \calR_i$ if and only if 

\begin{itemize}
    \item[1.] $R$ does not traverse the edge $(v_i,v_{i+1},\pm)$ as a forward edge; and  
    \item[2.] if $R$ has backward edges, then $R$ traverses the edge $(v_i,v_{i+1},\pm)$ as a backward edge. 
\end{itemize}

We then define $\calQ_{w} := \widehat{G}(\nu)$ and for each $1\leq i < w$ we define $$ \calQ_i := \mathrm{conv}\{\bx_R\mid R\in \calR_i\}.$$

For example, $\calQ_1$ for $\widehat{G}_B(\nu)$ in Figure~\ref{fig:G(nu)} is the convex hull of all routes in $\widehat{G}_B(\nu)$ except the routes $(s,6,3,t)$, $(s,6,3,4,t)$, and any route using edge $(1,3,\pm)$ as a forward edge.

\begin{theorem}
\label{thm:subdivision}
The flow polytope $\calF_{\widehat{G}_B(\nu)}$ can be written as
$$\calF_{\widehat{G}_B(\nu)} = \bigcup_{i=1}^w \calQ_i $$ 
where the polytopes $\calQ_1, \calQ_2$,..., $\calQ_w$ have pairwise disjoint interiors, and $w$ is the number of valleys in $\overline{\nu}=E\nu N$. Furthermore, for any nonempty $S \subseteq [w]$ we have that $\cap_{i\in S} \calQ_i\equiv \calF_{\cap_{i\in S}\widehat{G}(\nu(i)) }$.
\end{theorem}

\begin{proof}
We begin by checking that $\calF_{\widehat{G}_B(\nu)} = \bigcup_{i=1}^w \calQ_i$. 
Let $\bx_1 \in \calF_{\widehat{G}_B(\nu)}$. First, if all entries in $\bx_1$ are nonnegative, then $\bx_1 \in \calF_{\widehat{G}(\nu)} = \calQ_w$ so it can be written as a convex combination of routes in $\calQ_w$.
If $\bx_1$ has negative entries, then necessarily $w>1$ and the set $I\cap J=\{v_1,...,v_w\}$ contains at least two elements. 
We assume that $v_1<v_2<\cdots < v_w$.
Of the edges with negative flow, we choose edges $(v_{h_1},v_{h_1+1},\pm)$ and $(v_{h_1 + r_1-1},v_{h_1+ r_1},\pm)$ (not necessarily distinct) such that $v_{h_1}$ is minimal and $v_{h_1+r_1}$ is maximal.
The edges $(v_{h_1},v_{h_1 +1},\pm)$, $(v_{h_1+1},v_{h_1 +2},\pm)$, $\ldots$, $(v_{h_1+r_1-1},v_{h_1+r_1},\pm)$ then form the shortest path $P_1$ such that any edge with negative flow occurs in $P_1$ (although $P_1$ may contain edges with nonnegative flow). 

By conservation of flow at $v_{h_1}$, there must be an edge of the form $(s_1,v_{h_1 + r_1},+)$ with positive flow. 
By the construction of $\widehat{G}_B(\nu)$, either $s_1=s$ or there is an edge $(s,s_1,+)$ satisfying
$x_{(s,s_1,+)} = x_{(s_1,v_{h_1 + r_1},+)}$.
Similarly, there must be an edge $(v_{h_1},t_1,+)$ with positive flow that extends uniquely to the sink $t$.
Let $R_1$ be the unique route using the edges $(s_1,v_{h_1 + r_1},+)$, $(v_{h_1},t_1,+)$, and the edges in $P_1$.
The route $R_1$ contains all edges with negative flow in $\bx_1$.
Letting $\alpha_1 := \min\{x_{(s_1,v_{h_1+r_1},+)},x_{(v_{h_1},t_1,+)}\}>0$, we have that $\bx_2 := \bx_1 - \alpha_1 \bx_{R_1}$ is a $(1-\alpha_1)\bu$-flow on $\widehat{G}_B(\nu)$ in which one or both of the edges $(s_1,v_{h_1+r_1},+)$ and $(v_{h_1},t_1,+)$ have zero flow.
Furthermore, the flow in the edges of $P_1$ is strictly increased in $\bx_2$. If $\bx_2$ has negative entries, we can repeat the process. 

In general, for a $(1-\sum_{k=1}^{\ell-1}\alpha_k)\bu$-flow  $\bx_{\ell}$ with $\ell\geq 2$ on $\widehat{G}_B(\nu)$ with negative entries, we let $\bx_{\ell+1}$ be the 
$(1-\sum_{k=1}^{\ell}\alpha_k)\bu$-flow obtained from $\bx_\ell$ as follows. 
Let $(v_{h_\ell},v_{h_\ell+1},\pm)$ and $(v_{h_\ell+r_\ell - 1},v_{h_\ell+r_\ell}, \pm)$ be the edges with negative flow in $\bx_\ell$ where $v_{h_\ell}$ is minimal and $v_{h_\ell+r_\ell}$ is maximal.
The path $P_\ell$ from $v_{h_\ell}$ to $v_{h_\ell+r_\ell}$ is then the shortest path containing the edges with negative flow in $\bx_\ell$.
As in the case when $\ell=1$ above, the path $P_\ell$ can be extended to a route $R_\ell$ through edges $(s_\ell,v_{h_\ell + r_\ell},+)$ and $(v_{h_\ell},t_\ell,+)$, both of which have positive flow. 
Letting $\alpha_\ell := \min\{x_{(s_\ell,v_{h_\ell+r_\ell},+)},x_{(v_{h_\ell},t_\ell,+)}\}>0$, we define $\bx_{\ell+1} := \bx_\ell - \sum_{k=1}^{\ell}\alpha_k \bx_{R_k}$.
Now $\bx_{\ell+1}$ is a $(1-\sum_{k=1}^{\ell}\alpha_k)\bu$-flow, where one or both of the edges $(s_\ell,v_{h_\ell + r_\ell},+)$ and $(v_{h_\ell},t_\ell,+)$ have zero flow, and the flow in the edges of $P_\ell$ strictly increased.

The algorithm above can be repeated until we obtain a $(1-\sum_{k=1}^{q}\alpha_k)\bu$-flow $\bx_{q+1} := \bx_q - \sum_{k=1}^{q}\alpha_k \bx_{R_k}$ with only nonnegative entries. 
As a result, $\bx_{q+1}$ is a point in the flow polytope $\calF_{\widehat{G}(\nu)}$ dilated by a factor of $(1-\sum_{k=1}^{q}\alpha_k)$. 
Hence it can be written as a convex combination $\sum_{k=q+1}^r \alpha_k\bx_{R_k}$ with $\sum_{k=q+1}^r \alpha_k = 1-\sum_{k=1}^{q}\alpha_k$.  
Let $i$ be the index of any edge in the path $P_q$, which contains the edges with negative flow in $\bx_{q}$. 
Note that $P_q \subseteq P_{q-1} \subseteq \cdots \subseteq P_1$, and so all routes $R_k$ with $1\leq k \leq q$ traverse the edge $e_i$ as a backward edge. 
Furthermore, since $\bx_{q+1}$ has zero flow on the edge $e_i$, the routes $R_{q+1}$, $\ldots$, $R_r$ do not traverse $e_i$ as a forward edge.
Now $\bx_1 = \sum_{k=1}^r \alpha_k\bx_{R_k}$ with $\sum_{k=1}^r \alpha_k = 1$ and $\alpha_k \geq 0$ for each $k$. 
Therefore $\bx_1$ is a convex combination of routes in $\calQ_i$ and we conclude that $\calF_{\widehat{G}_B(\nu)} \subseteq \cup_{i=1}^w \calQ_i$. 
The reverse inclusion is immediate. 

To see that $\calQ_i$ and $\calQ_j$ have disjoint interiors for any $i,j \in [w]$ with $i\neq j$ we observe that $\calQ_i$ and $\calQ_j$ lie in the positive and negative half-spaces of the hyperplane determined by $x_{(v_i,v_{i+1}.\pm)} = x_{(v_j,v_{j+1},\pm)}$. 
If $i=w$ and $j\neq w$, then $\calQ_i$ and $\calQ_j$ are contained in the positive and negative half-spaces of the hyperplane determined by $x_{(v_j,v_{j+1},\pm)} = 0$. 

It remains to show that for $S\subseteq [w]$ we have $\cap_{i\in S}\calQ_i \equiv \calF_{\cap_{i\in S} \widehat{G}(\nu(i))}$. 
If $w\in S$, then $\cap_{i\in S}\calQ_i = \calF_{\cap_{i\in S} \widehat{G}(\nu(i))}$ since $\widehat{G}(\nu(w)) = \widehat{G}(\nu)$.
In the case that $w \notin S$, we construct an integral equivalence $\varphi$ as follows.  
First fix an ordering $e_1,\ldots, e_{m+1}$ on the set of edges in $\widehat{G}_B(\nu) \cup (v_1,v_w,-)$ so that $e_k = (v_k,v_{k+1}, \pm)$ for each $k\in [w-1]$ and $e_w = (v_1,v_w,-)$. 
Then $\cap_{i\in S} \calQ_i$ and $\calF_{\cap_{i\in S} \widehat{G}(\nu(i))}$ both embed into $\bbR^{m+1}$, and we define $\varphi:\bbR^{m+1} \to \bbR^{m+1}$ to be the linear transformation   
$$\bx \mapsto \bx + x_{\min(S)}\be_w - x_{\min(S)} \sum_{k = 1}^{w-1} \be_k . $$

By construction, a route $R$ in $\cap_{i\in S}\calQ_i$ must either not use any of the edges of the form $(v_i,v_{i+1},\pm)$ with $i\in S$ or it must use all of them as backward edges.
Thus for the vertex $\bx_R$, either $x_i=0$ for all $i\in S$ or $x_i = -1$ for all $i\in S$. 
In both cases, $R$ is uniquely determined by its edges $e_s$ and $e_t$ incident to the source and sink respectively. 
Now $\varphi$ maps $\bx_R$ to the unique route in $\cap_{i\in S} \widehat{G}(\nu(i))$ determined by $e_s$ and $e_t$. 
It follows that $\varphi$ restricts to a bijection between $\mathrm{aff}(\cap_{i\in S}\calQ_i)$ and $\mathrm{aff}(\cap_{i\in S} \widehat{G}(\nu(i)))$, with inverse $\varphi\inv$ given by $\varphi(\bx) = \bx - x_w\be_w + x_w \sum_{i=1}^{w-1} \be_i$.
This restriction of $\varphi$ is a bijection between $\cap_{i\in S}\calQ_i$ and $\cap_{i\in S} \widehat{G}(\nu(i))$, while mapping lattice points to lattice points.  
\end{proof}

For a polytope $\calP$ we use $\vol\calP$ to denote its normalized volume, i.e. the product of $\dim(\calP)!$ and the Euclidean volume of $\calP$.
The flow polytope $\calF_{\widehat{G}(\nu)} = \calQ_w$ was first considered by M\'esz\'aros and Morales in \cite{MM19} (up to an integral equivalence), where they showed that its normalized volume is the number of weak integer compositions above a fixed composition in the dominance order \cite[Corollary 6.19]{MM19}. 
Letting $\Cat(\nu)$ denote the number of lattice paths using $N$ and $E$ steps with the same end points as $\nu$ that do not go below $\nu$, their result can be restated as follows. 

\begin{lemma}(\cite[Theorem 1.1]{BGMY})
\label{lem:nuCarVol}
For a lattice path $\nu$, we have $\vol\calF_{\widehat{G}(\nu)} = \Cat(\nu).$
\end{lemma}

As a consequence we obtain the following identity, which is also implicit in \cite{CPS19}. 

\begin{corollary}
\label{cor:nuCatIdentity}
Let $\nu$ be a lattice path from $(0,0)$ to $(a,b)$ and let $w$ be the number of valleys in $\overline{\nu}$. Then 
$$ \binom{a+b}{a} = \sum_{i=1}^w \Cat(\nu(i)). $$
\end{corollary}

\begin{proof}
The normalized volume of $\Delta_a\times \Delta_b$ is $\binom{a+b}{a}$. 
By combining Lemma~\ref{lem:intEq2} and Theorem~\ref{thm:subdivision} its volume is also given by $\sum_{i=1}^w \mathrm{vol}\calQ_i$. 
The result then follows from $\calQ_i \equiv  \calF_{\widehat{G}(\nu(i))}$,
since $\vol \calF_{\widehat{G}(\nu(i))} = \Cat(\nu(i))$ by Lemma~\ref{lem:nuCarVol}.
\end{proof}

For any non-empty $k$-subset $S$ of $[w]$ the graph $\cap_{i\in S} \widehat{G}(\nu(i))$ has $k-1$ less edges than $\widehat{G}(\nu(i))$. 
Therefore, by the dimension formula for flow polytopes, $\calF_{\cap_{i\in S} \widehat{G}(\nu(i))} \equiv \cap_{i\in S}\calQ_i$ is a codimension $k-1$ inner face of the simplex-subdivision.
Since this holds for all non-empty $k$-subsets of $[w]$ where $1\leq k \leq w$, we have the following corollary.

\begin{corollary}
The dual to the subdivision $\cup_{i=1}^w \calQ_i$ of $\calF_{\widehat{G}_B(\nu)}$ is a $w$-simplex.
\end{corollary}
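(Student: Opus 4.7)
The plan is to unpack what Theorem~\ref{thm:subdivision} already gives us and check that it matches the face data of a simplex on $w$ vertices. Recall that the dual of a polyhedral subdivision of a polytope $\calP$ is the simplicial complex whose vertices are the top-dimensional cells and whose $(k{-}1)$-faces correspond to the nonempty intersections of $k$ top-dimensional cells that form codimension-$(k{-}1)$ interior faces of $\calP$. By Theorem~\ref{thm:subdivision}, the subdivision $\bigcup_{i=1}^w \calQ_i$ has $w$ top-dimensional cells, so the dual begins with $w$ vertices.

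The core step is then to verify, for every nonempty $S \subseteq [w]$ with $|S|=k$, that $\bigcap_{i \in S}\calQ_i$ is nonempty and has codimension exactly $k-1$ in $\calF_{\widetilde{G}_B(\nu)}$. By Theorem~\ref{thm:subdivision}, $\bigcap_{i\in S}\calQ_i \equiv \calF_{\bigcap_{i\in S}\widetilde{G}(\nu,i)}$, so I would reduce this to an edge count on the intersection graph. Each $G(\nu,i)$ contains the cycle $C_\nu$ with the single edge leaving $v_i$ removed; intersecting over $i\in S$ removes all $k$ edges of $C_\nu$ leaving $\{v_i : i \in S\}$, while a single $G(\nu,i)$ removes only one, so the intersection has precisely $k-1$ fewer edges than any individual $\widetilde{G}(\nu,i)$. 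Non-emptiness of $\calF_{\bigcap_{i\in S}\widetilde{G}(\nu,i)}$ follows because even after these deletions the partial augmentation still provides direct source and sink edges at every valley vertex, so routes from source to sink persist. Each deleted edge contributes an independent coordinate direction in the ambient flow space, so the $k-1$ deletions drop the dimension by exactly $k-1$.

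Combining these two points, every nonempty $S \subseteq [w]$ produces a $(|S|-1)$-face of the dual complex, and distinct subsets produce distinct faces since each intersection face is uniquely labeled by which cells contain it. The resulting dual is therefore the simplicial complex of all nonempty subsets of $[w]$, which is precisely the $w$-simplex as claimed.

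Since the heavy lifting — constructing the $\calQ_i$, proving the cover, and establishing the integral equivalences on intersections — was all done in Theorem~\ref{thm:subdivision}, I do not anticipate any significant obstacle here. The only point requiring care is the edge count together with the verification that removing those cycle edges genuinely drops the polytope dimension by one each time (as opposed to producing coincidences or idle reductions), which is transparent from the explicit description of $G(\nu,i)$.
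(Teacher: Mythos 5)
Your argument follows the same route as the paper's: you count edges to conclude that $\calF_{\cap_{i\in S}\widetilde{G}(\nu,i)}$ is a codimension-$(|S|-1)$ interior face for every nonempty $S\subseteq[w]$, then observe that the face poset of the dual complex is therefore the poset of nonempty subsets of $[w]$. The paper's proof is a two-sentence version of exactly this edge count; you additionally spell out the nonemptiness and dimension-drop claims that the paper leaves implicit (both of which also follow directly from Theorem~\ref{thm:subdivision}, which asserts $\cap_{i\in S}\calQ_i\equiv\calF_{\cap_{i\in S}\widetilde{G}(\nu,i)}$ for every nonempty $S$). No gap; this is the paper's argument with a bit more scaffolding.
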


In light of the above corollary, we refer to the subdivision of Theorem~\ref{thm:subdivision} as the \textbf{simplex-subdivision} of $\calF_{\widehat{G}_B(\nu)}$. 
The simplex-subdivision of $\calF_{\widehat{G}_B(\nu)}$ then induces a subdivision of $\Delta_a\times \Delta_b$ via the integral equivalence in Lemma~\ref{lem:intEq2}.
We refer to this subdivision of $\Delta_a\times \Delta_b$ as the \textbf{simplex-subdivision induced by $\nu$}.

\begin{remark}
In the special case that $\nu = (NE)^n$, Theorem~\ref{thm:subdivision} is reminiscent of Cho's subdivision for the full root polytope of type $A_n$ in \cite[Theorem 16]{Cho99} (see also \cite[Theorem 7.1]{EHR18} where it is called the Legendre polytope). 
In fact, the simplex-subdivision of $\Delta_n\times\Delta_n$ induced by $\nu$ can be projected along the span of the vectors $\{(\be_i,\be_i)\mid i \in [n] \}$ to give Cho's subdivision of the full root polytope.   
\end{remark}

\section{The subdivision algebra}
\label{sec:4}

The subdivision algebra is an associative algebra given by M\'esz\'aros in which subdivisions of acyclic root polytopes \cite{Mes11} and certain flow polytopes \cite{Mes14} can be interpreted as reductions of monomials. 
In this section we use the simplex-subdivision from Theorem~\ref{thm:subdivision} to extend this tool of obtaining triangulations to a product of two simplices.
The graphs in this section will not have bidirectional edges, and so for convenience when $i<j$ we will write forward edges $(i,j,+)$ as $(i,j)$ and backward edges $(i,j,-)$ as $(j,i)$.

We begin with a quick overview of the subdivision algebra for flow polytopes. 
Let $G$ be a digraph with all forward edges, and let $(i,j)$ and $(j,k)$ be a pair of edges in $G$ with $i<j<k$. 
Define
\begin{align*}
    G_1 &:= (G \setminus \{(i,j)\}) \cup \{(i,k)\}, \\
    G_2 &:= (G \setminus \{(j,k)\}) \cup \{(i,k)\}, \text{ and} \\
    G_3 &:= (G \setminus \{(i,j),(j,k)\}) \cup \{(i,k,+)\}. 
\end{align*}

\begin{lemma}(Reduction lemma \cite[Lemma 2.2]{Mes14}, \cite[Proposition 2.3]{MS17})
Let $G$ be a digraph with only forward edges, and let $G_1$, $G_2$, and $G_3$ obtained from $G$ as above. Then there exists interior disjoint polytopes $\calP_1$ and $\calP_2$ such that $\calF_{\widetilde{G}} = \calP_1 \cup \calP_2$, where $\calP_1 \equiv \calF_{\widetilde{G}_1}$, $\calP_2\equiv \calF_{\widetilde{G}_2}$, and  $\calP_1\cap \calP_2 \equiv \calF_{\widetilde{G}_3}.$ 
\label{lem:reductionLemma}
\end{lemma}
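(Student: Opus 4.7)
The plan is to split $\calF_{\widehat{G}}$ along the hyperplane $H=\{x_{(i,j)}=x_{(j,k)}\}$ and construct explicit integral equivalences to $\calF_{\widehat{G}_1}$ and $\calF_{\widehat{G}_2}$ on the two sides. Define
$$\calP_1 := \calF_{\widehat{G}} \cap \{x_{(i,j)} \leq x_{(j,k)}\}, \quad \calP_2 := \calF_{\widehat{G}} \cap \{x_{(i,j)} \geq x_{(j,k)}\}.$$
By construction these two subpolytopes have disjoint interiors, their union is $\calF_{\widehat{G}}$, and their intersection is the slice $\calF_{\widehat{G}} \cap H$. Both $\calP_1$ and $\calP_2$ are full-dimensional since $H$ is not a supporting hyperplane of $\calF_{\widehat{G}}$.

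First I would build an affine, lattice-preserving bijection $\varphi_1 : \calP_1 \to \calF_{\widehat{G}_1}$ by reinterpreting the flow traveling through the length-two path $i \to j \to k$ as flow along the new direct edge $(i,k)$. Explicitly, for a unit flow $\bx$ on $\widehat{G}$ with $x_{(i,j)} \leq x_{(j,k)}$, set $\varphi_1(\bx)_{(i,k)} = x_{(i,j)}$ and $\varphi_1(\bx)_{(j,k)} = x_{(j,k)} - x_{(i,j)} \geq 0$, drop the coordinate $x_{(i,j)}$ (absent in $\widehat{G}_1$), and leave every other coordinate untouched. A direct check of the conservation equation at $i$, $j$, and $k$ confirms that $\varphi_1(\bx)$ is a unit flow on $\widehat{G}_1$; the inverse $y_{(i,k)} \mapsto x_{(i,j)}$, $y_{(j,k)} + y_{(i,k)} \mapsto x_{(j,k)}$ automatically enforces $x_{(i,j)} \leq x_{(j,k)}$ since $y_{(j,k)} \geq 0$, so the image lies in $\calP_1$. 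The symmetric construction then produces $\varphi_2 : \calP_2 \to \calF_{\widehat{G}_2}$ by setting $\varphi_2(\bx)_{(i,k)} = x_{(j,k)}$ and $\varphi_2(\bx)_{(i,j)} = x_{(i,j)} - x_{(j,k)}$, dropping $x_{(j,k)}$.

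Finally I would verify that the common slice $\calP_1 \cap \calP_2$ maps integrally equivalently onto $\calF_{\widehat{G}_3}$. On this slice $x_{(i,j)} = x_{(j,k)}$, so under either $\varphi_1$ or $\varphi_2$ the surviving coordinate among $y_{(i,j)}$, $y_{(j,k)}$ becomes zero; the image is the face of $\calF_{\widehat{G}_1}$ cut out by $y_{(j,k)}=0$, and contracting this zero coordinate gives $\calF_{\widehat{G}_3}$. The main subtlety, rather than a real obstacle, is in the last identification: one must check that this face is genuinely integrally equivalent to $\calF_{\widehat{G}_3}$ and not merely in bijection with it. This follows because each coordinate change used is unimodular on $\Z^m$ and fixes all edges of the augmentation, so $\varphi_1$ and $\varphi_2$ restrict to lattice isomorphisms between the affine hulls. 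The rest is routine bookkeeping.
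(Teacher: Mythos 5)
The paper states this lemma as a citation to M\'esz\'aros \cite{Mes14} and M\'esz\'aros--St.~Dizier \cite{MS17} and offers no proof of its own, so there is nothing internal to compare against; your proof is the standard one used in those references. Your argument is correct: slicing $\calF_{\widehat G}$ by the hyperplane $x_{(i,j)}=x_{(j,k)}$ and applying the unimodular linear change of coordinates $(x_{(i,j)},x_{(j,k)})\mapsto(y_{(i,k)},y_{(j,k)})=(x_{(i,j)},x_{(j,k)}-x_{(i,j)})$ (and its mirror image for $\calP_2$) gives exactly the integral equivalences $\calP_1\equiv\calF_{\widehat G_1}$, $\calP_2\equiv\calF_{\widehat G_2}$, and $\calP_1\cap\calP_2\equiv\calF_{\widehat G_3}$, and the conservation check at $i,j,k$ goes through as you indicate. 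The only small thing left implicit is why $H$ is not a supporting hyperplane: this holds because both edges appear in routes of $\widehat G$ (for instance $(s,i),(i,j),(j,t)$ lies strictly in $\{x_{(i,j)}>x_{(j,k)}\}$ while $(s,j),(j,k),(k,t)$ lies strictly in $\{x_{(i,j)}<x_{(j,k)}\}$), so $\calF_{\widehat G}$ has vertices on both open sides and both $\calP_1$ and $\calP_2$ are full-dimensional.
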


The replacement of $G$ with the graphs $G_1$, $G_2$, and $G_3$ is called a \textbf{reduction}.
A reduction of $G$ can be viewed as a rooted tree, with $G$ as the root with the graphs $G_1$, $G_2$, and $G_3$ as leaves.
Continuing to perform reductions on the leaves until no more reductions are possible, we obtain a rooted ternary tree called a \textbf{reduction tree}. 
It is important to note here that when a reduction is performed on a leaf at a pair $\{(i,j),(j,k)\}$, the same pair is reduced in all other leaves where such a reduction is possible.
A reduction tree obtained in this way is not unique in general, and depends on the reduction order. 
However, the number of leaves in two reduction trees of a graph is always the same \cite[Lemma 1]{Mes14}. 
In light of Lemma~\ref{lem:reductionLemma}, the reduction tree of a graph $G$ encodes a triangulation of the flow polytope $\calF_{\widetilde{G}}$, as is made explicit in the following.

\begin{proposition}(\cite[Theorem 3]{Mes14})
Let $G$ be a digraph with only forward edges. 
Then the leaves of a reduction tree of $G$ encode a regular flag triangulation of $\calF_{\widetilde{G}}$.
Each leaf with $\abs{E(G)}-k$ edges encodes a codimension $k$ inner face of the triangulation. \qed
\label{prop:redTree}
\end{proposition}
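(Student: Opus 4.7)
The plan is to prove the statement by strong induction on the number of reducible pairs in $G$, using the Reduction Lemma as the engine. First I would identify the terminal graphs, namely those $G$ admitting no reducible pair $(i,j),(j,k)$: for such graphs every inner vertex is either a source-type or sink-type relative to the inner edges, and one verifies directly that $\calF_{\widehat{G}}$ is a single simplex whose vertices are the routes of $\widehat{G}$. This handles the base case, where the reduction tree is a single leaf encoding the trivial triangulation.

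For the inductive step I would apply the Reduction Lemma at the root: pick any reducible pair to obtain interior-disjoint $\calP_1\equiv\calF_{\widehat{G}_1}$ and $\calP_2\equiv\calF_{\widehat{G}_2}$ whose intersection is $\calF_{\widehat{G}_3}$. Each of $G_1$, $G_2$, $G_3$ strictly decreases some natural reduction statistic (for instance, the total number of reducible pairs when the correct order is chosen, since replacing $(i,j),(j,k)$ by $(i,k)$ removes at least one such chain). By the inductive hypothesis, the reduction subtrees of $G_1$ and $G_2$ triangulate their flow polytopes, and the triangulations agree along the common face $\calF_{\widehat{G}_3}$ because the Reduction Lemma is applied identically to matching pairs in both subtrees. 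Gluing the two triangulations along this common face yields a triangulation of $\calF_{\widehat{G}}$. The codimension statement then follows inductively: a leaf with $|E(G)|-k$ edges is produced by exactly $k$ applications of the ``$G_3$'' branch, each removing one edge and each corresponding to intersecting with one top-dimensional simplex, so the leaf encodes a codimension-$k$ inner face.

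To establish the flag property I would characterize the top-dimensional simplices as the sets of routes that are pairwise non-crossing in a suitable combinatorial sense (the characterization that arises naturally from the reduction process: two routes $R,R'$ lie in a common leaf precisely when they do not form a reducible configuration). Since this non-crossing relation is pairwise, any collection of routes that is pairwise compatible is jointly compatible, hence spans a simplex in the triangulation. For regularity, the plan is to produce an explicit height function $h:\mathrm{vert}(\calF_{\widehat{G}})\to\R$ whose lower envelope recovers this triangulation, following M\'esz\'aros' construction: assign to each route $R$ a height given by a strictly convex function of an appropriate weighting of its edges (for example $\sum_{(i,j)\in R}c_{ij}$ for generic convex weights $c_{ij}$), and then verify inductively that the reduction at each step corresponds to ``pushing down'' along the edge one adds during reduction.

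The main obstacle is regularity: the inductive gluing and the flag property are largely bookkeeping once the Reduction Lemma is in hand, but producing a height function that realizes every possible reduction order uniformly requires the careful choice of weights $c_{ij}$ and a proof that the resulting lower envelope matches the reduction tree at each node. I expect the height function would have to be defined in terms of the reduction order itself (for instance, later reductions contributing smaller perturbations), and verifying that no two simplices coincide in the lower envelope is the delicate step.
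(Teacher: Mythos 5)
This proposition is not proved in the paper; it is imported from M\'esz\'aros \cite[Theorem~3]{Mes14}, so there is no in-paper argument to compare against. Your sketch is, at a high level, the shape one would expect, but it contains two concrete gaps.

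First, the induction statistic is not well-founded as stated. Replacing the pair $(i,j),(j,k)$ does not in general decrease the number of reducible chains: each of $G_1$ and $G_2$ keeps one of the original two edges \emph{and} gains the new edge $(i,k)$, which can participate in new chains $(h,i),(i,k)$ or $(i,k),(k,\ell)$ that did not exist in $G$. Appealing to ``the correct order'' does not repair this without specifying the order and proving the claim for it. The standard termination argument for the subdivision algebra uses a genuine monovariant (for instance, M\'esz\'aros works with a lexicographic or length-based well-ordering on the multiset of edges, or with an algebraic normal-form/Gr\"obner-type argument), and one cannot wave this away: without termination the ``reduction tree'' is not even a finite object.

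Second, and you flag this yourself, regularity is the real content of the theorem, and the plan you describe --- generic convex weights $c_{ij}$, with ``later reductions contributing smaller perturbations'' --- is not obviously realizable. The difficulty is exactly that simplices produced in different branches of the reduction tree share lower-dimensional faces $\calF_{\widehat{G}_3}$, so the perturbations must be compatible across all branches simultaneously; a per-branch perturbation need not glue to a single globally coherent height function. In \cite{Mes14} the regularity is not obtained by an ad hoc perturbation scheme of this kind but by relating the reduction-tree triangulation to a triangulation with a known regular structure (the argument goes through a correspondence with triangulations of root polytopes and an explicit lifting that is consistent with \emph{every} cut, not built cut-by-cut). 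So the sketch correctly identifies where the proof is hard, but the proposed mechanism for resolving it is the part most likely to fail; if you want a self-contained argument you should look at how the lifting is actually constructed in \cite{Mes14} rather than trying to derive one from the reduction order.

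The flag-property and codimension bookkeeping parts of your sketch are fine in spirit: the leaves with $|E(G)|-k$ edges arise from exactly $k$ passages through the ``$G_3$'' branch, and flagness follows once you show that compatibility of routes is a pairwise condition. Those pieces match the standard treatment; the gaps are termination and regularity.
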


If $L$ is a leaf in a reduction tree of $G$, then we recover the corresponding inner face (up to an integral equivalence) of the triangulation of $\calF_{\widetilde{G}}$ by computing $\calF_{\widetilde{L}}$. 
If we restrict to a graph $G(\nu)$, then Proposition~\ref{prop:redTree} can be rephrased to holds for the flow polytope of the $\nu$-augmentation of $G(\nu)$ as follows.

\begin{corollary}
The leaves of a reduction tree of $G(\nu)$ encode a regular flag triangulation of $\calF_{\widehat{G}(\nu)}$. 
Each leaf with $\abs{E(G(\nu))}-k$ edges encodes a codimension $k$ inner face of the triangulation. 
\label{cor:nuAug}
\end{corollary}

\begin{proof}
Let $\scrT$ be a reduction tree of $G(\nu)$ and let $\calT$ be the flag regular triangulation of $\calF_{\widetilde{G}(\nu)}$ it encodes by Proposition~\ref{prop:redTree}. 
Observe that a $\nu$-augmented graph $\widehat{G}(\nu)$ can be obtained from its full augmentation $\widetilde{G}(\nu)$ by removing the edges $\calE_s$ of the form $(s,j)$ where $j\in J\setminus I$ and the set of edges $\calE_t$  of the form $(i,t) $ with $i\in I\setminus J$. 
A vertex $i \in I\setminus J$ is a source of $G(\nu)$ and a vertex $j \in J\setminus I$ is a sink. 
Since reductions do not remove sources or sinks, these vertices are also sources and sink in each leaf of $\scrT$. 
Therefore, removing the edges in $\calE_s \cup \calE_t$ amounts to removing precisely the routes of the form $\{(s,i),(i,t)\}$ with  $i \notin I\cap J$ from $\widetilde{G}(\nu)$ and $\widetilde{L}$ for any leaf $L$ of $\scrT$.
It follows that $\calF_{\widehat{G}(\nu)}$ is a codimension $\abs{\calE_s\cup \calE_t}$ face of $\calF_{\widetilde{G}(\nu)}$. 
The restriction of $\calT$ to the face $\calF_{\widehat{G}(\nu)}$ is now a regular flag triangulation encoded by the leaves of $\scrT$.
The encoding is the same as in the fully augmented case, with the understanding that the inner face corresponding to a leaf $L$ of $\scrT$ is now given by the flow polytope of $\widehat{L}$ instead of $\widetilde{L}$.
\end{proof}

The geometry of reductions can now be encoded in an algebraic setting known as the subdivision algebra. 
In particular, a reduction can be encoded using the following \textbf{reduction relation} $ x_{ij}x_{jk} = x_{ij}x_{ik} + x_{ik}x_{jk} + \beta x_{ik}.$
See Figure~\ref{fig:reduction} for a visualization relating the terms in the reduction relation to graphs. 
In particular, as a consequence of Lemma~\ref{lem:reductionLemma}, a reduction of the pair $x_{ij}x_{jk}$ with $i<j<k$ encodes the cutting of the flow polytope by the hyperplane $\{\bx\in\bbR^m \mid x_{(i,j)} = x_{(j,k)}\}$. 
The subdivision algebra is defined as follows.

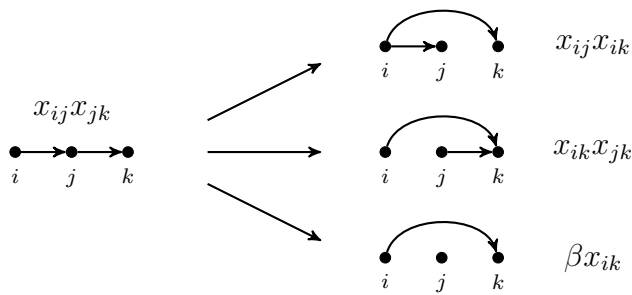
\begin{figure}
\begin{center}
\begin{tikzpicture}
\begin{scope}[xshift=0, yshift=0, scale=0.75]
	\vertex[fill,label=below:\tiny{$i$}](a1) at (1,0) {};
	\vertex[fill,label=below:\tiny{$j$}](a2) at (2,0) {};
	\vertex[fill,label=below:\tiny{$k$}](a3) at (3,0) {};

	\draw[>=stealth', ->, thick] (a1)--(a2);
	\draw[>=stealth', ->, thick] (a2)--(a3);

    \node[] (l1) at (0,0.5) {};
    \node[] (l2) at (0,1.2) {};
    \node[] (r1) at (4,0.5) {};
    \node[] (r2) at (4,1.2) {};    
\end{scope}

\begin{scope}[xshift=140, yshift=40, scale=0.75]
	\vertex[fill,label=below:\tiny{$i$}](a1) at (1,0) {};
	\vertex[fill,label=below:\tiny{$j$}](a2) at (2,0) {};
	\vertex[fill,label=below:\tiny{$k$}](a3) at (3,0) {};

	\draw[>=stealth', ->, thick] (a1)--(a2);
    \draw[>=stealth', ->, thick] (a1) .. controls (1.25, 0.8) and (2.75, 0.8) .. (a3);

\end{scope}

\begin{scope}[xshift=140, yshift=0, scale=0.75]
	\vertex[fill,label=below:\tiny{$i$}](a1) at (1,0) {};
	\vertex[fill,label=below:\tiny{$j$}](a2) at (2,0) {};
	\vertex[fill,label=below:\tiny{$k$}](a3) at (3,0) {};

	\draw[>=stealth', ->, thick] (a2)--(a3);
   \draw[>=stealth', ->, thick] (a1) .. controls (1.25, 0.8) and (2.75, 0.8) .. (a3);

\end{scope}

\begin{scope}[xshift=140, yshift=-40, scale=0.75]
	\vertex[fill,label=below:\tiny{$i$}](a1) at (1,0) {};
	\vertex[fill,label=below:\tiny{$j$}](a2) at (2,0) {};
	\vertex[fill,label=below:\tiny{$k$}](a3) at (3,0) {};

   \draw[>=stealth', ->, thick] (a1) .. controls (1.25, 0.8) and (2.75, 0.8) .. (a3);

\end{scope}

\begin{scope}[xshift=90, yshift=-10, scale=0.9]
\node[] (a1) at (0,0) {};
\node[] (a2) at (2,-1) {};
\draw[>=stealth', ->, thick] (a1)--(a2);
\end{scope}

\begin{scope}[xshift=90, yshift=10, scale=0.9]
\node[] (a1) at (0,0) {};
\node[] (a2) at (2,1) {};
\draw[>=stealth', ->, thick] (a1)--(a2);
\end{scope}

\begin{scope}[xshift=90, yshift=0, scale=0.9]
\node[] (a1) at (0,0) {};
\node[] (a2) at (2,0) {};
\draw[>=stealth', ->, thick] (a1)--(a2);
\end{scope}

\begin{scope}[xshift=43, yshift=15, scale=0.9]
\node[] (a1) at (0,0) {$x_{ij}x_{jk}$};
\end{scope}

\begin{scope}[xshift=240, yshift=40, scale=0.9]
\node[] (a1) at (0,0) {$x_{ij}x_{ik}$};
\end{scope}

\begin{scope}[xshift=240, yshift=0, scale=0.9]
\node[] (a1) at (0,0) {$x_{ik}x_{jk}$};
\end{scope}

\begin{scope}[xshift=240, yshift=-40, scale=0.9]
\node[] (a1) at (0,0) {$\beta x_{ik}$};
\end{scope}

\end{tikzpicture}
\end{center}
    \caption{The graphical representation of a reduction and the associated terms in the subdivision algebra}
    \label{fig:reduction}
\end{figure}

\begin{definition} 
\label{def:subdivAlgebra}
The \textbf{subdivision algebra} $\calS(\beta)$ is an associative and commutative algebra over the ring of polynomials $\bbZ[\beta]$ generated by $\{x_{ij} \mid i\in [n], j\in [n], i\neq j\}$, subject to the reduction relation
$x_{ij}x_{jk} = x_{ik}x_{ij} + x_{jk}x_{ik} + \beta x_{ik}$, where $i\neq k$. 
For fixed $i$, $j$, and $k$, replacing all terms $x_{ij}x_{jk}$ in a polynomial $P$ in $\calS(\beta)$ with $x_{ik}x_{ij} + x_{jk}x_{ik} + \beta x_{ik}$ is called a \textbf{reduction}. 
If applying a sequence of reductions to $P$ yields a polynomial $R$ that cannot be reduced further, we say that $R$ is a \textbf{reduced form} of $P$.   
\end{definition}

The definition above is slightly more general than the one given by M\'esz\'aros \cite{Mes11} in that here a generator $x_{ij}$ need not satisfy $i<j$, and we do not require that $i < j < k$ in the reduction relation. 
Figure~\ref{fig:reduction} gives a visualization of a reduction.
Sometimes it is more convenient to consider \textbf{simple reductions}, which only encode the full-dimensional polytopes in a subdivision. 
These are obtained by setting $\beta=0$ in the reduction relation. 
Let us consider a small example. 

\begin{example}
Let $P = x_{12}x_{24}x_{52}$. We can apply reductions as follows. 
\begin{align*}
    \mathbf{x_{12}x_{24}}x_{52} &= x_{12}x_{14}x_{52} + x_{14}\mathbf{x_{24}x_{52}} + x_{14}x_{52}\beta \\
    &= x_{12}x_{14}x_{52} + x_{14}x_{52}x_{54} + x_{14}x_{24}x_{54} + x_{14}x_{54}\beta + x_{14}x_{52}\beta \\
\end{align*}
The pairs in bold indicate the pairs that were reduced at each step.
In the second reduction, we used the commutativity to write $\bx_{24} \bx_{52}$ as $\bx_{52}\bx_{24}$ in order to perform the reduction.
Since no more reductions are possible in the last line, the resulting polynomial is a reduced form of $P$.
Setting $\beta=0$ gives the simple reductions at each step.
\end{example}

Given a digraph $G$ with forward and backward edges, we identify an edge $(i,j)$ the generator $x_{ij}$. 
The graph $G$ can then be identified with the monomial $M(G) := \prod_{(i,j)\in E(G)} x_{ij}$.

\begin{theorem}
\label{thm:polynomial}
The simplex-subdivision of $\calF_{\widehat{G}_B(\nu)}$ is encoded in the polynomial 
$$P_\nu := \sum_{\substack{S \subseteq [w] \\ S\neq \emptyset}} \beta^{\, \abs{S}-1} M(\cap_{i\in S}G(\nu(i))) $$
where $w$ is the number of valleys in $\overline{\nu} = E\nu N$. 
Furthermore, reduced forms of $P_\nu$ encode triangulations of $\calF_{\widehat{G}_B(\nu)}$. 
\end{theorem}

\begin{proof}
By Theorem~\ref{thm:subdivision}, each nonempty $S\subseteq [w]$ corresponds bijectively to a flow polytope $\calF_{\cap_{i\in S}\widehat{G}(\nu(i))}$ that is integrally equivalent to an inner face of the simplex-subdivision of $\calF_{\widehat{G}_B(\nu)}$.
We associate this flow polytope with the term $\beta^{\,\abs{S}-1}M(\cap_{i\in S}G(\nu(i)))$, where $\beta^{\,\abs{S}-1}$ encodes the fact that it has codimension $\abs{S}-1$ and the monomial $M(\cap_{i\in S}G(\nu(i)))$ records the inner edges of the graph $\cap_{i\in S}\widehat{G}(\nu(i))$. 
Summing all such terms over the nonempty subsets of $[w]$ gives the polynomial $P_\nu$, whose terms thereby encode the flow polytopes corresponding to the inner faces of the simplex-subdivision of $\calF_{\widehat{G}_B(\nu)}$. 

To see that reduced forms of $P_\nu$ encode triangulations of $\calF_{\widehat{G}_B(\nu)}$, consider first an inner face of the simplex-subdivision corresponding to a flow polytope $\calF_{H}$ for some $H:= \cap_{i\in S}\widehat{G}(\nu(i))$ with nonempty $S\subseteq [w]$.
The graph $H$ has at most one backward edge, and is a subgraph of $\widehat{G}(\nu(\ell))$ for some $\ell \in S$. 
We can apply the cyclic relabeling $\sigma_\ell$ from Lemma~\ref{lem:sigma} on the inner vertices of $H$ to obtain a graph $H'$ where all of its edges are forward edges.
See Figure~\ref{fig:reordered} for an example of such a relabeling.
Let $H_*$ and $H'_*$ denote the subgraphs induced by the inner edges of $H$ and $H'$ respectively.
By Proposition~\ref{prop:redTree}, the leaves of a reduction tree $\scrT'$ for the graph $H'_*$ encode a flag regular triangulation of the flow polytope $\calF_{H'}$.
Applying $\sigma^{-1}_\ell$ to the vertices of the graphs in $\scrT'$ gives a reduction tree $\scrT$ for $H_*$. 
Changing the sign of the flow in the edge $(v_w,v_1)$ is an integral equivalence (as seen in Lemma~\ref{lem:intEq}), so the flow polytopes of the leaves of $\scrT'$ are integrally equivalent to those from $\scrT$.
Thus the leaves of $\scrT$ encode a regular unimodular triangulation of $\calF_H$.

Since any inner face of the simplex-subdivision can now be subdivided by reducing the corresponding monomial in $P_\nu$, we obtain subdivisions of $\calF_{\widehat{G}_B(\nu)}$ when performing reductions on $P_\nu$. 
\end{proof}

\begin{figure}
\begin{center}
\begin{tikzpicture}
\begin{scope}[xshift=-20, yshift=0, scale=0.8]
\vertex[fill,label=below:\scriptsize{$s$}](a0) at (0,0) {};
	\vertex[fill,label=below:\tiny{$1$}](a1) at (1,0) {};
	\vertex[fill,label=below:\tiny{$2$}](a2) at (2,0) {};
	\vertex[fill,label=below:\tiny{$3$}](a3) at (3,0) {};
	\vertex[fill,label=below:\tiny{$4$}](a4) at (4,0) {};
	\vertex[fill,label=below:\tiny{$5$}](a5) at (5,0) {};
	\vertex[fill,label=below:\scriptsize{$t$}](a6) at (6,0) {};

	\draw[>=stealth', ->, color=gray] (a0)--(a1);	
	\draw[>=stealth', ->, very thick] (a1)--(a2);
	\draw[>=stealth', ->, very thick] (a3)--(a4);
	\draw[>=stealth', ->, very thick] (a4)--(a5);
	\draw[>=stealth', <-, very thick] (a1) .. controls (1.25, 1.6) and (4.75, 1.6) .. (a5);
	\draw[>=stealth', ->, color=gray] (a5)--(a6);
	\draw[>=stealth', ->, color=gray] (a0) .. controls (0.25, 0.7) and (1.75, 0.7) .. (a2);
	\draw[>=stealth', ->, color=gray] (a0) .. controls (0.25, 1.4) and (2.75, 1.4) .. (a3);
	\draw[>=stealth', ->, color=gray] (a0) .. controls (0.25, 2.1) and (3.75, 2.1) .. (a4);
	\draw[>=stealth', ->, color=gray] (a0) .. controls (0.25, 2.8) and (4.75, 2.8) .. (a5);
	\draw[>=stealth', ->, color=gray] (a4) .. controls (4.25, 0.7) and (5.75, 0.7) .. (a6);
	\draw[>=stealth', ->, color=gray] (a2) .. controls (2.25, 2.1) and (5.75, 2.1) ..  (a6);
	\draw[>=stealth', ->, color=gray] (a1) .. controls (1.25, 2.8) and (5.75, 2.8) ..  (a6);
\end{scope}

\begin{scope}[xshift=180, yshift=0, scale=0.8]
\vertex[fill,label=below:\scriptsize{$s$}](a0) at (0,0) {};
	\vertex[fill,label=below:\tiny{$1$}](a1) at (1,0) {};
	\vertex[fill,label=below:\tiny{$2$}](a2) at (2,0) {};
	\vertex[fill,label=below:\tiny{$3$}](a3) at (3,0) {};
	\vertex[fill,label=below:\tiny{$4$}](a4) at (4,0) {};
	\vertex[fill,label=below:\tiny{$5$}](a5) at (5,0) {};
	\vertex[fill,label=below:\scriptsize{$t$}](a6) at (6,0) {};

	\draw[>=stealth', ->, color=gray] (a0)--(a1);	
	\draw[>=stealth', ->, very thick] (a1)--(a2);
	\draw[>=stealth', ->, very thick] (a2)--(a3);
	\draw[>=stealth', ->, very thick] (a3)--(a4);
	\draw[>=stealth', ->, very thick] (a4)--(a5);

	\draw[>=stealth', ->, color=gray] (a5)--(a6);
	\draw[>=stealth', ->, color=gray] (a0) .. controls (0.25, 0.7) and (1.75, 0.7) .. (a2);
	\draw[>=stealth', ->, color=gray] (a0) .. controls (0.25, 1.4) and (2.75, 1.4) .. (a3);
	\draw[>=stealth', ->, color=gray] (a0) .. controls (0.25, 2.1) and (3.75, 2.1) .. (a4);
	\draw[>=stealth', ->, color=gray] (a0) .. controls (0.25, 2.8) and (4.75, 2.8) .. (a5);
	\draw[>=stealth', ->, color=gray] (a4) .. controls (4.25, 0.7) and (5.75, 0.7) .. (a6);
	\draw[>=stealth', ->, color=gray] (a2) .. controls (2.25, 2.1) and (5.75, 2.1) ..  (a6);
	\draw[>=stealth', ->, color=gray] (a1) .. controls (1.25, 2.8) and (5.75, 2.8) ..  (a6);
\end{scope}

\begin{scope}[xshift=145, yshift=15, scale=0.8]
\node[] (a) at (0,0) {$\longrightarrow$};
\end{scope}

\begin{scope}[xshift=50, yshift=-30, scale=0.8]
\node[] (a) at (0,0) {$x_{34}x_{45}x_{51}x_{12}$};
\end{scope}
\begin{scope}[xshift=250, yshift=-30, scale=0.8]
\node[] (a) at (0,0) {$x_{12}x_{23}x_{34}x_{45}$};
\end{scope}
\end{tikzpicture}
\end{center}
    \caption{A cyclic relabeling of the inner vertices of $\widehat{G}(\nu(2))$ from Figure~4 by $\sigma_2$ gives a graph with only forward edges}
    \label{fig:reordered}
\end{figure}
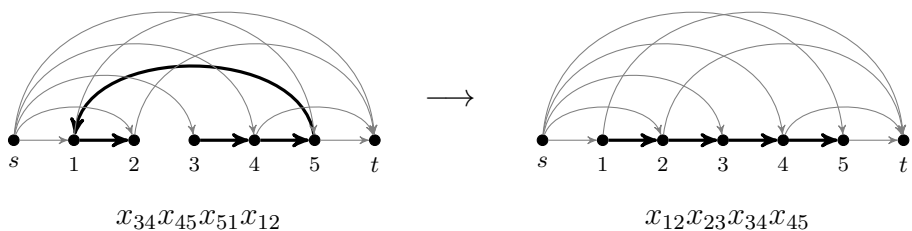

By the integral equivalence in Lemma~\ref{lem:intEq2} we then have the following. 

\begin{corollary}
Let $\nu$ be a lattice path from $(0,0)$ to $(a,b)$. 
Then reducing $P_\nu$ encodes subdivisions of $\Delta_a\times \Delta_b$.
\end{corollary}

The triangulations obtained via Theorem~\ref{thm:polynomial} are unimodular, as all triangulations of a product of simplices are unimodular \cite[Proposition 6.2.11]{DRS10}. 
Triangulating each $\calQ_i$ with the subdivision algebra gives regular triangulations by Corollary~\ref{cor:nuAug}.
Thus the triangulation encoded by a reduced form of $P_\nu$ is a union of $w$ regularly triangulated polytopes.
It would be interesting to know if the triangulation formed by their union is also regular.

We conclude this section with an example of applying the subdivision algebra directly to a product of two simplices $\Delta_{a} \times \Delta_{b}$.

\begin{example}
\label{ex.a3b2}
Let $a = 3$ and $b=2$. To subdivide $\Delta_{3} \times \Delta_{2}$, we first choose a path $\nu$ from $(0,0)$ to $(3,2)$, say $\nu = NEENE$, and consider $\overline{\nu}$ with its canonical indexing $E_1N_1E_2E_3N_3E_4N_4$.
Now $I = \{1,2,3,4\}$ and $J =\{1,3,4\}$, and $I\cap J = \{1,3,4\}$. 
Cycling the path $\overline{\nu}$ to start at its cyclic peaks we have $\overline{\nu}(1) = E_2E_3N_3E_4N_4E_1N_1$, $\overline{\nu}(2) = E_4N_4E_1N_1E_2E_3N_3$, and $\overline{\nu}(3) = E_1N_1E_2E_3N_3E_4N_4$. 
From the resulting graphs $G(\nu(i))$ for $i\in[3]$ we read the monomials
$M_1 = x_{23}x_{34}x_{41}$,
$M_2 = x_{41}x_{13}x_{23}$,
and $M_3 = x_{13}x_{23}x_{34}$.
The polynomial $P_\nu$ is now given by 
$$P_\nu = \sum_{\substack{S \subseteq [3] \\ S\neq \emptyset}} \beta^{\,\abs{S}-1} \gcd\{M_i\mid i \in S\}, $$
which expands to
$$ P_\nu = x_{23}x_{34}x_{41} + x_{41}x_{13}x_{23} + x_{13}x_{23}x_{34} + x_{23}x_{41}\beta + x_{23}x_{34}\beta + x_{13}x_{23}\beta + x_{23}\beta^2.
$$

\begin{figure}
\begin{center}
\begin{tikzpicture}
\begin{scope}[xshift = 0, yshift = 0, scale=0.6]
	\node[style={circle,draw, inner sep=2pt, fill=black}] (p1)  at (0,0)  {};
	\node[style={circle,draw, inner sep=2pt, fill=black}] (p2)  at (2,1)  {};
    \node[style={circle,draw, inner sep=2pt, fill=black}] (p3)  at (-2,1)  {}; 
    \node[style={circle,draw, inner sep=2pt, fill=black}] (p4)  at (-1,2.5)  {};    
    \node[style={circle,draw, inner sep=2pt, fill=black}] (p5)  at (1,2.5)  {};
    
    \draw[ultra thick, red] (p1) to (p2);
    \draw[ultra thick, red] (p2) to (p5);
    \draw[ultra thick, red] (p5) to (p4);    
    \draw[ultra thick, red] (p4) to (p3);
    \draw[ultra thick, red] (p1) to (p3);    
\end{scope}

\begin{scope}[xshift = 33, yshift = -50, scale=0.6]
	\node[style={circle,draw, inner sep=2pt, fill=black}] (q1)  at (0,0)  {};
	\node[style={circle,draw, inner sep=2pt, fill=black}] (q2)  at (1.5,1.5)  {};
    \node[style={circle,draw, inner sep=2pt, fill=black}] (q0)  at (-1.8,-1)  {}; 
    
    \draw[ultra thick, red] (q1) to (q2);
    \draw[ultra thick, red] (q0) to (q1);    
\end{scope}

\begin{scope}[xshift = -42, yshift = -56, scale=0.6]
	\node[style={circle,draw, inner sep=2pt, fill=black}] (r1)  at (0,0)  {};
	\node[style={circle,draw, inner sep=2pt, fill=black}] (r2)  at (-1,2)  {};
    
    \draw[ultra thick, red] (r1) to (r2);
\end{scope}
    \draw[] (q1) to (p1);
    \draw[] (q2) to (p2);
    \draw[] (r1) to (q0);
    \draw[] (r2) to (p3);
    
	\node[] (x1)  at (-1.5,1.8)  {$x_{21}x_{31}x_{41}$};    
	\node[] (x1)  at (1.5,1.8)  {$x_{21}x_{31}x_{34}$};
	\node[] (x1)  at (-2.2,0.8)  {$x_{21}x_{23}x_{41}$};   
	\node[] (x1)  at (2.2,0.8)  {$x_{21}x_{24}x_{34}$};
	\node[] (x1)  at (-0.65,-0.35)  {$x_{21}x_{23}x_{24}$};  
	
	\node[] (x1)  at (3.1,-0.7)  {$x_{14}x_{24}x_{34}$};  	
	\node[] (x1)  at (2.2,-1.8)  {$x_{14}x_{23}x_{24}$};
	\node[] (x1)  at (0,-2.7)  {$x_{13}x_{14}x_{23}$};	
	
	\node[] (x1)  at (-3.1,-0.7)  {$x_{23}x_{41}x_{43}$};  	
	\node[] (x1)  at (-2.55,-1.85)  {$x_{13}x_{23}x_{43}$};  	
\end{tikzpicture}
\end{center}
    \caption{The dual graph of the triangulation obtained in Example~\ref{ex.a3b2}}
    \label{fig:dualGraph}
\end{figure}
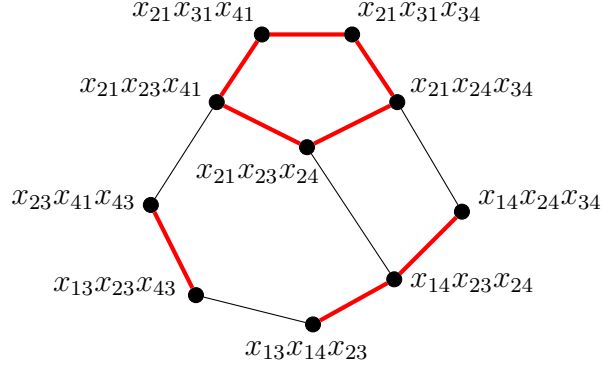

For simplicity, we consider only simple reductions and therefore set $\beta=0$. We then reduce this polynomial as follows. 

\begin{align*}
    P_\nu(\beta=0) &= x_{23}x_{34}x_{41} + \mathbf{x_{13}}x_{23}\mathbf{x_{41}} + x_{13}x_{23}x_{34} \\
    &= x_{23}x_{34}x_{41} + x_{13}x_{23}x_{43} + x_{23}x_{41}x_{43} + \mathbf{x_{13}}x_{23}\mathbf{x_{34}} \\
    &= \mathbf{x_{23}x_{34}}x_{41} + x_{13}x_{23}x_{43} + x_{23}x_{41}x_{43} + x_{13}x_{14}x_{23} + x_{14}\mathbf{x_{23}x_{34}} \\
    &= x_{23}\mathbf{x_{24}x_{41}} + \mathbf{x_{24}}x_{34}\mathbf{x_{41}} + x_{13}x_{23}x_{43} + x_{23}x_{41}x_{43} + x_{13}x_{14}x_{23} + \\ 
    & \qquad  x_{14}x_{23}x_{24} + x_{14}x_{24}x_{34} \\
    &= x_{23}x_{24}x_{21} + x_{23}x_{21}x_{41} + x_{24}x_{34}x_{21} + x_{21}\mathbf{x_{34}x_{41}} + x_{13}x_{23}x_{43} +  \\ 
    & \qquad x_{23}x_{41}x_{43} + x_{13}x_{14}x_{23} + x_{14}x_{23}x_{24} + x_{14}x_{24}x_{34} \\
    &= x_{23}x_{24}x_{21} + x_{23}x_{21}x_{41} + x_{24}x_{34}x_{21} + x_{21}x_{34}x_{31} + x_{21}x_{31}x_{41} +  \\ 
    & \qquad x_{13}x_{23}x_{43} + x_{23}x_{41}x_{43} + x_{13}x_{14}x_{23} + x_{14}x_{23}x_{24} + x_{14}x_{24}x_{34} \\    
\end{align*}

Each summand in the reduced form corresponds to a simplex, with each generator $x_{ij}$ giving the vertex $(\be_i,\be_j)$ of the simplex, where $\be_i$ and $\be_j$ are the $i$th and $j$th standard basis vectors in $\bbR^4$ and $\bbR^3$ respectively. 
For example, the simplex corresponding to the summand $x_{23}x_{24}x_{21}$ is the convex hull of vertices  $(\be_2,\be_3)$, $(\be_2,\be_4)$, and $(\be_2,\be_1)$, along with the cone points of the triangulation, which are $(\be_k,\be_k)$ where $k\in I\cap J = \{1,3,4\}$. From the reduced form we also obtain the dual graph of the triangulation, with an edge between monomials differing by a single generator. The dual graph is shown in Figure~\ref{fig:dualGraph}.
\end{example}

\section{The $\nu$-cyclohedral triangulations of $\Delta_a\times\Delta_b$.}
\label{sec:5}

In this section, we use the subdivision algebra to obtain a combinatorially interesting triangulation of $\calF_{\widehat{G}_B(\nu)}$, or equivalently, a product of two simplices. 
The triangulation in question is a geometric realization of the cyclic $\nu$-Tamari complex of \cite{CPS19} and we refer to it as the $\nu$-cyclohedral triangulation of $\Delta_a\times \Delta_b$. 
The $\nu$-Tamari complex and cyclic $\nu$-Tamari complex were introduced by Ceballos, Padrol, and Sarmiento in \cite{CPS19} in their study of the geomtry of Tamari lattices. 
We will recap the necessary definitions and results below.

Given a lattice path $\nu$ and the canonically indexed $\overline{\nu}$ using labels $\{1,\ldots, n\}$, with $I$ and $J$ being the index sets of $E$ steps and $N$ steps respectively.
A pair $(i,j)$ with $i\in I$ and $j\in J$ is said to be an \textbf{arc}. 
If the arc $(i,j)$ satisfies $i\leq j$, we say that it is an \textbf{increasing arc}.
We denote the set of all arcs on $\overline{\nu}$ by $\calA_{\overline{\nu}}$. Define the length of an arc $(i,j)$ to be $j-i \pmod n$
An arc $(i,j)$ is \textbf{minimal} if $i=j$, and it is \textbf{maximal} if $i=1$ and $j=n$, or $j<i$ and there is no $k\in I\cup J$ with $j < k < i$.
Maximal arcs have length $n-1$. 
Two arcs $(i,j)$ and $(i',j')$ are said to \textbf{cyclically cross} if any of the following conditions hold (up to reversing the roles of the arcs): 
\begin{align*}
&(1) \quad i < i' < j < j', &(2) \quad j' < i < i' < j,\quad &\quad \quad (3) \quad j < j' < i < i', \\ 
&(4) \quad i' < j < j' < i, &(5) \quad i < j' < i' < j,\quad &\quad \quad(6) \quad j < i < j' < i'.
\end{align*} 

These are visualized in Figure~\ref{fig:cycCrossing}.

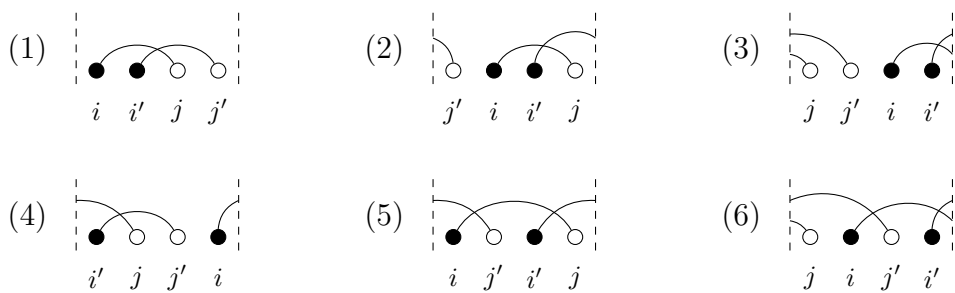
\begin{figure}[ht!]
\begin{center}
\begin{tikzpicture}[scale=0.9]
\begin{scope}[xshift = 0, yshift = 0, scale=0.6]
	\node[style={circle,draw, inner sep=2pt, fill=black}] (1)  at (1,0)  {};
	\node[style={circle,draw, inner sep=2pt, fill=black}] (2)  at (2,0)  {};
	\node[style={circle,draw, inner sep=2pt, fill=none}] (3)  at (3,0)  {};
	\node[style={circle,draw, inner sep=2pt, fill=none}] (4)  at (4,0)  {};
	
	\node[] (a)  at (1,-1)  {{\footnotesize $i$}};
	\node[] (b)  at (2,-1)  {{\footnotesize $i'$}};
	\node[] (c)  at (3,-1)  {{\footnotesize $j$}};
   \node[] (d)  at (4,-1) {{\footnotesize $j'$}};	
	
	\draw[] (1) to [bend left=60] (3);
   \draw[] (2) to [bend left=60] (4);
   
   \node[] (w1) at (0.5,-0.6) {};
   \node[] (w2) at (0.5,1.8) {};
   \node[] (w3) at (4.5,-0.6) {};
   \node[] (w4) at (4.5,1.8) {};
	\draw[dashed] (w1) -- (w2);  
	\draw[dashed] (w3) -- (w4);  	
	
	\node[] (l) at (-0.7,0.5) {(1)};
\end{scope}

\begin{scope}[xshift = 150, yshift = 0, scale=0.6]
	\node[style={circle,draw, inner sep=2pt, fill=none}] (1)  at (1,0)  {};
	\node[style={circle,draw, inner sep=2pt, fill=black}] (2)  at (2,0)  {};
	\node[style={circle,draw, inner sep=2pt, fill=black}] (3)  at (3,0)  {};
	\node[style={circle,draw, inner sep=2pt, fill=none}] (4)  at (4,0)  {};
	
	\node[] (a)  at (1,-1)  {{\footnotesize $j'$}};
	\node[] (b)  at (2,-1)  {{\footnotesize $i$}};
	\node[] (c)  at (3,-1)  {{\footnotesize $i'$}};
   \node[] (d)  at (4,-1) {{\footnotesize $j$}};	
	
	\draw[] (3) to [bend left=60] (4.5,0.8);
	\draw[] (0.5,0.8) to [bend left=30] (1);
   \draw[] (2) to [bend left=60] (4);
   
   \node[] (w1) at (0.5,-0.6) {};
   \node[] (w2) at (0.5,1.8) {};
   \node[] (w3) at (4.5,-0.6) {};
   \node[] (w4) at (4.5,1.8) {};
	\draw[dashed] (w1) -- (w2);  
	\draw[dashed] (w3) -- (w4);  	

	\node[] (l) at (-0.7,0.5) {(2)};
\end{scope}

\begin{scope}[xshift = 300, yshift = 0, scale=0.6]
	\node[style={circle,draw, inner sep=2pt, fill=none}] (1)  at (1,0)  {};
	\node[style={circle,draw, inner sep=2pt, fill=none}] (2)  at (2,0)  {};
	\node[style={circle,draw, inner sep=2pt, fill=black}] (3)  at (3,0)  {};
	\node[style={circle,draw, inner sep=2pt, fill=black}] (4)  at (4,0)  {};
	
	\node[] (a)  at (1,-1)  {{\footnotesize $j$}};
	\node[] (b)  at (2,-1)  {{\footnotesize $j'$}};
	\node[] (c)  at (3,-1)  {{\footnotesize $i$}};
   \node[] (d)  at (4,-1) {{\footnotesize $i'$}};	
	
	\draw[] (3) to [bend left=60] (4.5,0.4);
	\draw[] (0.5,0.4) to [bend left=20] (1);
	\draw[] (0.5,0.9) to [bend left=30] (2);
   \draw[] (4) to [bend left=30] (4.5,0.9);

   \node[] (w1) at (0.5,-0.6) {};
   \node[] (w2) at (0.5,1.8) {};
   \node[] (w3) at (4.5,-0.6) {};
   \node[] (w4) at (4.5,1.8) {};
	\draw[dashed] (w1) -- (w2);  
	\draw[dashed] (w3) -- (w4);  
	
	\node[] (l) at (-0.7,0.5) {(3)};		
\end{scope}

\begin{scope}[xshift = 0, yshift = -70, scale=0.6]
	\node[style={circle,draw, inner sep=2pt, fill=black}] (1)  at (1,0)  {};
	\node[style={circle,draw, inner sep=2pt, fill=none}] (2)  at (2,0)  {};
	\node[style={circle,draw, inner sep=2pt, fill=none}] (3)  at (3,0)  {};
	\node[style={circle,draw, inner sep=2pt, fill=black}] (4)  at (4,0)  {};
	
	\node[] (a)  at (1,-1)  {{\footnotesize $i'$}};
	\node[] (b)  at (2,-1)  {{\footnotesize $j$}};
	\node[] (c)  at (3,-1)  {{\footnotesize $j'$}};
   \node[] (d)  at (4,-1) {{\footnotesize $i$}};	
	
	\draw[] (1) to [bend left=60] (3);
	\draw[] (0.5,0.9) to [bend left=30] (2);
   \draw[] (4) to [bend left=30] (4.5,0.9);
   
   \node[] (w1) at (0.5,-0.6) {};
   \node[] (w2) at (0.5,1.8) {};
   \node[] (w3) at (4.5,-0.6) {};
   \node[] (w4) at (4.5,1.8) {};
	\draw[dashed] (w1) -- (w2);  
	\draw[dashed] (w3) -- (w4);  
	
	\node[] (l) at (-0.7,0.5) {(4)};		
\end{scope}

\begin{scope}[xshift = 150, yshift = -70, scale=0.6]
	\node[style={circle,draw, inner sep=2pt, fill=black}] (1)  at (1,0)  {};
	\node[style={circle,draw, inner sep=2pt, fill=none}] (2)  at (2,0)  {};
	\node[style={circle,draw, inner sep=2pt, fill=black}] (3)  at (3,0)  {};
	\node[style={circle,draw, inner sep=2pt, fill=none}] (4)  at (4,0)  {};
	
	\node[] (a)  at (1,-1)  {{\footnotesize $i$}};
	\node[] (b)  at (2,-1)  {{\footnotesize $j'$}};
	\node[] (c)  at (3,-1)  {{\footnotesize $i'$}};
   \node[] (d)  at (4,-1) {{\footnotesize $j$}};	
	
	\draw[] (1) to [bend left=60] (4);
	\draw[] (0.5,0.9) to [bend left=30] (2);
   \draw[] (3) to [bend left=30] (4.5,0.9);
   
   \node[] (w1) at (0.5,-0.6) {};
   \node[] (w2) at (0.5,1.8) {};
   \node[] (w3) at (4.5,-0.6) {};
   \node[] (w4) at (4.5,1.8) {};
	\draw[dashed] (w1) -- (w2);  
	\draw[dashed] (w3) -- (w4);  	
	
	\node[] (l) at (-0.7,0.5) {(5)};
\end{scope}

\begin{scope}[xshift = 300, yshift = -70, scale=0.6]
	\node[style={circle,draw, inner sep=2pt, fill=none}] (1)  at (1,0)  {};
	\node[style={circle,draw, inner sep=2pt, fill=black}] (2)  at (2,0)  {};
	\node[style={circle,draw, inner sep=2pt, fill=none}] (3)  at (3,0)  {};
	\node[style={circle,draw, inner sep=2pt, fill=black}] (4)  at (4,0)  {};
	
	\node[] (a)  at (1,-1)  {{\footnotesize $j$}};
	\node[] (b)  at (2,-1)  {{\footnotesize $i$}};
	\node[] (c)  at (3,-1)  {{\footnotesize $j'$}};
   \node[] (d)  at (4,-1) {{\footnotesize $i'$}};	
	
	\draw[] (2) to [bend left=50] (4.5,0.4);
	\draw[] (0.5,0.4) to [bend left=20] (1);
   \draw[] (4) to [bend left=30] (4.5,0.9);
	\draw[] (0.5,0.9) to [bend left=40] (3);   
   
   \node[] (w1) at (0.5,-0.6) {};
   \node[] (w2) at (0.5,1.8) {};
   \node[] (w3) at (4.5,-0.6) {};
   \node[] (w4) at (4.5,1.8) {};
	\draw[dashed] (w1) -- (w2);  
	\draw[dashed] (w3) -- (w4);  	

	\node[] (l) at (-0.7,0.5) {(6)};
\end{scope}
\end{tikzpicture}
\end{center}
    \caption{The six configurations of cyclically crossing arcs}
    \label{fig:cycCrossing}
\end{figure}

A pair of arcs that do not cyclically cross are said to be \textbf{cyclically non-crossing}.
Note that a cyclic permutation of the indices preserves cyclically crossing arcs, and thereby it also preserves cyclically non-crossing arcs.

A \textbf{cyclic $(I,J)$-forest} is a subgraph of the complete bipartite graph $K_{I,J}$ whose arcs are cyclically non-crossing.
A \textbf{cyclic $(I,J)$-tree} is a maximal cyclic $(I,J)$-forest.
It will be useful to consider the subset of cyclic $(I,J)$-trees with only increasing arcs. 
We therefore refer to such cyclic $(I,J)$-trees as \textbf{increasing $(I,J)$-trees}. 
The cyclic $(I,J)$-tree on the left in Figure~\ref{fig:IJtrees} is also an increasing $(I,J)$-tree.

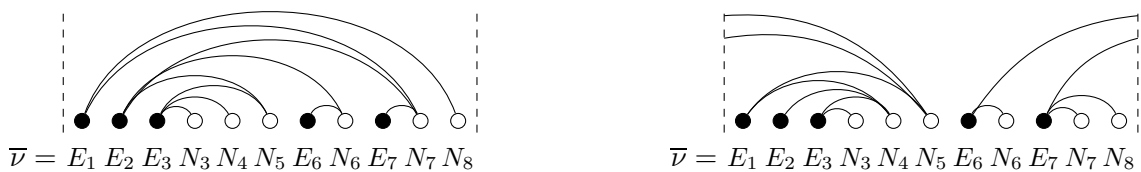
\begin{figure}
\begin{center}
\begin{tikzpicture}
\begin{scope}[xshift = 0, yshift = 0, scale=0.5]
	\node[style={circle,draw, inner sep=2pt, fill=black}] (1)  at (1,0)  {};
	\node[style={circle,draw, inner sep=2pt, fill=black}] (2)  at (2,0)  {};
	\node[style={circle,draw, inner sep=2pt, fill=black}] (3)  at (3,0)  {};
	\node[style={circle,draw, inner sep=2pt, fill=none}] (4)  at (4,0)  {};
   \node[style={circle,draw, inner sep=2pt, fill=none}] (5)  at (5,0)  {};	
	\node[style={circle,draw, inner sep=2pt, fill=none}] (6)  at (6,0)  {};
	\node[style={circle,draw, inner sep=2pt, fill=black}] (7)  at (7,0)  {};
	\node[style={circle,draw, inner sep=2pt, fill=none}] (8)  at (8,0)  {};
	\node[style={circle,draw, inner sep=2pt, fill=black}] (9)  at (9,0)  {};
	\node[style={circle,draw, inner sep=2pt, fill=none}] (10)  at (10,0)  {};	
	\node[style={circle,draw, inner sep=2pt, fill=none}] (11)  at (11,0)  {};		
	
	\node[] (nu)  at (-0.3,-0.95)  {\textcolor{black}{{\small $\overline{\nu}=$}}};	
		\node[] (E1)  at (1,-1)  {\textcolor{black}{{\footnotesize $E_1$}}};
	\node[] (E2)  at (2,-1)  {\textcolor{black}{{\footnotesize $E_2$}}};
	\node[] (E3)  at (3,-1)  {\textcolor{black}{{\footnotesize $E_3$}}};
    \node[] (N1)  at (4,-1)  {\textcolor{black}{{\footnotesize $N_3$}}};	
	\node[] (N2)  at (5,-1)  {\textcolor{black}{{\footnotesize $N_4$}}};
	\node[] (N3)  at (6,-1)  {\textcolor{black}{{\footnotesize $N_5$}}};	
	\node[] (E4)  at (7,-1)  {\textcolor{black}{{\footnotesize $E_6$}}};	
	\node[] (E5)  at (8,-1)  {\textcolor{black}{{\footnotesize $N_6$}}};
	\node[] (N4)  at (9,-1)  {\textcolor{black}{{\footnotesize $E_7$}}};	
	\node[] (E5)  at (10,-1)  {\textcolor{black}{{\footnotesize $N_{7}$}}};
	\node[] (N5)  at (11,-1)  {\textcolor{black}{{\footnotesize $N_{8}$}}};

   \draw[] (1) to [bend left=70] (11);
   \draw[] (1) to [bend left=65] (10);
   \draw[] (2) to [bend left=60] (10);
   \draw[] (2) to [bend left=65] (6);
   \draw[] (2) to [bend left=65] (8);
   \draw[] (3) to [bend left=60] (4);
   \draw[] (3) to [bend left=60] (5);
   \draw[] (3) to [bend left=65] (6);
   \draw[] (7) to [bend left=65] (8);
   \draw[] (9) to [bend left=65] (10);
   
   \node[] (w1) at (0.5,-0.6) {};
   \node[] (w2) at (0.5,3.3) {};
   \node[] (w3) at (11.5,-0.6) {};
   \node[] (w4) at (11.5,3.3) {};
	\draw[dashed] (w1) -- (w2);  
	\draw[dashed] (w3) -- (w4);  	   
   
\end{scope}

\begin{scope}[xshift = 250, yshift = 0, scale=0.5]
	\node[style={circle,draw, inner sep=2pt, fill=black}] (1)  at (1,0)  {};
	\node[style={circle,draw, inner sep=2pt, fill=black}] (2)  at (2,0)  {};
	\node[style={circle,draw, inner sep=2pt, fill=black}] (3)  at (3,0)  {};
	\node[style={circle,draw, inner sep=2pt, fill=none}] (4)  at (4,0)  {};
   \node[style={circle,draw, inner sep=2pt, fill=none}] (5)  at (5,0)  {};	
	\node[style={circle,draw, inner sep=2pt, fill=none}] (6)  at (6,0)  {};
	\node[style={circle,draw, inner sep=2pt, fill=black}] (7)  at (7,0)  {};
	\node[style={circle,draw, inner sep=2pt, fill=none}] (8)  at (8,0)  {};
	\node[style={circle,draw, inner sep=2pt, fill=black}] (9)  at (9,0)  {};
	\node[style={circle,draw, inner sep=2pt, fill=none}] (10)  at (10,0)  {};	
	\node[style={circle,draw, inner sep=2pt, fill=none}] (11)  at (11,0)  {};		
	
	\node[] (nu)  at (-0.3,-0.95)  {\textcolor{black}{{\small $\overline{\nu}=$}}};	
	\node[] (E1)  at (1,-1)  {\textcolor{black}{{\footnotesize $E_1$}}};
	\node[] (E2)  at (2,-1)  {\textcolor{black}{{\footnotesize $E_2$}}};
	\node[] (E3)  at (3,-1)  {\textcolor{black}{{\footnotesize $E_3$}}};
    \node[] (N1)  at (4,-1)  {\textcolor{black}{{\footnotesize $N_3$}}};	
	\node[] (N2)  at (5,-1)  {\textcolor{black}{{\footnotesize $N_4$}}};
	\node[] (N3)  at (6,-1)  {\textcolor{black}{{\footnotesize $N_5$}}};	
	\node[] (E4)  at (7,-1)  {\textcolor{black}{{\footnotesize $E_6$}}};	
	\node[] (E5)  at (8,-1)  {\textcolor{black}{{\footnotesize $N_6$}}};
	\node[] (N4)  at (9,-1)  {\textcolor{black}{{\footnotesize $E_7$}}};	
	\node[] (E5)  at (10,-1)  {\textcolor{black}{{\footnotesize $N_{7}$}}};
	\node[] (N5)  at (11,-1)  {\textcolor{black}{{\footnotesize $N_{8}$}}};

   \draw[] (1) to [bend left=55] (6);
   \draw[] (1) to [bend left=55] (5);
   \draw[] (2) to [bend left=55] (5);      
   \draw[] (3) to [bend left=50] (4);
   \draw[] (3) to [bend left=60] (5);
   \draw[] (7) to [bend left=65] (8);
   \draw[] (7) to [bend left=25] (11.5,2.8);
   \draw[] (0.5,2.8) to [bend left=30] (6);
   \draw[] (9) to [bend left=25] (11.5,2.2);
   \draw[] (0.5,2.2) to [bend left=30] (6);

   \draw[] (9) to [bend left=55] (10);
   \draw[] (9) to [bend left=65] (11);   
   
   \node[] (w1) at (0.5,-0.6) {};
   \node[] (w2) at (0.5,3.3) {};
   \node[] (w3) at (11.5,-0.6) {};
   \node[] (w4) at (11.5,3.3) {};
	\draw[dashed] (w1) -- (w2);  
	\draw[dashed] (w3) -- (w4);  	   
   
\end{scope}
\end{tikzpicture}
\end{center}
    \caption{Two cyclic $(I,J)$-trees, with $I$ and $J$ determined by $\overline{\nu}$. The left tree has maximal arc $(1,8)$, while the right tree has maximal arc $(6,5)$}
    \label{fig:IJtrees}
\end{figure}

For an increasing $(I,J)$-tree $T$, let $\mu_T$ denote the unique lattice path whose number of $E$ steps at height $k$ is the indegree of the $k$th element of $J$ (in the order determined by $\overline{\nu}$).

\begin{lemma}(\cite[Proposition 3.3]{CPS19})
\label{lem:bIJectionCat(nu)}
The map given by $T\mapsto \mu_T$ is a bijection between the increasing $(I,J)$-trees determined by $\nu$ and the lattice paths from $(0,0)$ to $(a,b)$ staying weakly above $\nu$. 
In particular, the number of increasing $(I,J)$-trees is $\Cat(\nu)$. \qed
\end{lemma}

\begin{definition}
The \textbf{cyclic $\nu$-Tamari complex} is the flag simplicial complex of cyclic $(I,J)$-forests determined by $\nu$ whose minimal non-faces are pairs of cyclically crossing arcs. 
\end{definition}

The facets of the cyclic $\nu$-Tamari complex are its cyclic $(I,J)$-trees. 
Apart from the maximal arc, each arc in a cyclic $(I,J)$-tree is the longest arc incident to either of its end points.
Hence the number of arcs in a cyclic $(I,J)$-tree is $\abs{I}+\abs{J}+1$. 
As an immediate consequence we have the following.

\begin{lemma}
\label{lem:cycTamDim}
The cyclic $\nu$-Tamari complex is pure with dimension $a+b = \abs{I} + \abs{J} - 2$. \qed
\end{lemma}

Two cyclic $(I,J)$-trees $T$ and $T'$ are related by an \textbf{increasing flip} if $T'$ can be obtain from $T$ by replacing an arc $(i,j)\in T$ with an arc $(i',j')$ where $i<i'$. 
This gives a cover relation $T<_{I,J}T'$ on cyclic $(I,J)$-trees.  
The \textbf{cyclic $\nu$-Tamari poset} is the transitive closure of the relation $<_{I,J}$ on the set of cyclic $(I,J)$-trees \cite[Lemma 7.1]{CPS19}.	
Restricting to the set of increasing $(I,J)$-trees, the transitive closure of $<_{I,J}$ gives the \textbf{$\nu$-Tamari lattice} \cite[Proposition 3.5]{CPS19}.

A directed graph is said to be \textbf{alternating} if each of its vertices is a source or sink. 
Let $D_{\overline{\nu}}$ denote the set of all maximal alternating graphs on vertex set $I\cup J$ using forward edges $(i,j)$ satisfying $i\in I$ and $j \in J$.
Note that by maximality the graphs in $D_{\overline{\nu}}$ must be trees and therefore have $\abs{I\cup J} -1$ edges.
Recall that applying the permutation $\sigma_i$ from Lemma~\ref{lem:sigma} to the indices of $\overline{\nu}(i)$ gives its canonical labeling.
Thus by a slight abuse of notation, we let $\sigma_i \overline{\nu}(i)$ denote the canonically labeled $\overline{\nu}(i)$ with its $E$ steps indexed by $\sigma_iI := \{\sigma_i(k)\mid k\in I\}$ and $N$ steps indexed by $\sigma_iJ:=\{\sigma_i(k)\mid k\in J\}$.
In addition, let $D_{\overline{\nu}(i)}^*$ denote the set of graphs in $D_{\sigma_i\overline{\nu}(i)}$, but where the permutation $\sigma_i^{-1}$ has been applied to the vertices in each graph\footnote{In the case when $\nu = (NE)^n$ the graphs in $\cup_{i\in [w]} D^*_{\overline{\nu}(i)}$ are the \emph{valid digraphs} of Ehrenborg, Hetyei, and Readdy \cite{EHR18}.}.

\begin{lemma}
\label{lem:bijectionDnutoIJ}
There is a bijection $\Phi$ between the graphs in $\cup_{i\in [w]} D_{\overline{\nu}(i)}^*$ and the set of cyclic $(I,J)$-trees determined by $\overline{\nu}$.
In addition, two graphs $G_1$ and $G_2$ in $\cup_{i\in [w]} D_{\overline{\nu}(i)}^*$ differ by a single edge if and only if $\Phi(G_1)$ and $\Phi(G_2)$ differ by a single arc.  
\end{lemma}

\begin{proof}
Let $G$ be a graph in $\cup_{i\in [w]} D_{\overline{\nu}(i)}^*$, and let $T_G$ denote the subgraph of $K_{I,J}$ induced by the arcs $\{(k,\ell) \mid (k,\ell) \in E(G)\}\cup \{(k,k)\mid k\in I\cap J\}$.
We define $\Phi$ by $G\mapsto T_G$, and verify that $T_G$ is in fact a cyclic $(I,J)$-tree.
By the construction of $G$ there is a unique $i \in [w]$ for which the edges in $\sigma_iG$ are forward edges, with $\sigma_iG$ denoting the graph obtained from $G$ by permuting its vertices by $\sigma_i$. 
Note that the longest edge in $G$ is pointed toward this $i$. 
For example, in Figure~\ref{fig:PhiEx} the graph $G$ has longest edge $(3,2)$, so $\sigma_2G$ has only forward edges.
The arcs $\{(\sigma_i(k),\sigma_i(\ell)) \mid (k,\ell) \in E(G)\} \cup \{(\sigma_i(k),\sigma_i(k)) \mid k \in I\cap J\}$ are now non-crossing and form an $(\sigma_iI,\sigma_iJ)$-forest $T_{\sigma_iG}$. 
Moreover, the number of arcs in $T_{\sigma_iG}$ is $\abs{\sigma_iI \cup \sigma_iJ} -1 + \abs{\sigma_iI\cap \sigma_iJ} = \abs{\sigma_iI} + \abs{\sigma_iJ} - 1$, and so $T_{\sigma_iG}$ is a cyclic $(\sigma_iI,\sigma_iJ)$-tree. 
Since a cyclic permutation of indices by $\sigma_i\inv$ preserves cyclic non-crossing condition on arcs, $T_G$ is a cyclic $(I,J)$-tree. 

Define $\Phi\inv$ by mapping the cyclic $(I,J)$-tree $T$ to the graph $G_T$ induced by the edge set $\{(k,\ell)\mid (k,\ell) \text{ is a non-minimal arc in }T \}$. 
To verify that $\Phi\inv$ is well-defined, we show that $G_T \in \cup_{i\in [w]} D_{\overline{\nu}(i)}^*$. 
The maximal arc in $T$ begins at the vertex corresponding the $E$ step of the $i$th cyclic peak for some $i\in [w]$. 
Let $\sigma_iT$ denote the $(\sigma_iI,\sigma_iJ)$-tree with arcs $\{(\sigma_i(k),\sigma_i(\ell)) \mid (k,\ell) \text{ is an arc in }T \}$.
Then the arcs of $\sigma_iT$ are increasing, with its vertices given by the path $\sigma_i\overline{\nu}(i)$. 
Let $\sigma_iG_T$ be the graph induced by the edge set $\{(\sigma_i(k),\sigma_i(\ell)) \mid (k,\ell) \text{ is a non-minimal arc in } T \}$. 
The edges of $\sigma_iG_T$ are necessarily increasing, non-crossing, and alternating. 
Furthermore, the number of edges in $\sigma_iG_T$ is $\abs{\sigma_iI\cup \sigma_iJ} -1$, so it is a maximal alternating graph.
It follows that $G_T \in D^*_{\overline{\nu}(i)}$.

Since $\Phi\circ \Phi\inv$ and $\Phi\inv \circ \Phi$ are identity maps, $\Phi$ is a bijection. 
It is immediate by the construction of $\Phi$ that two graphs in the domain of $\Phi$ differ by a single edge if and only if their images under $\Phi$ differ by a single arc. 
\end{proof}

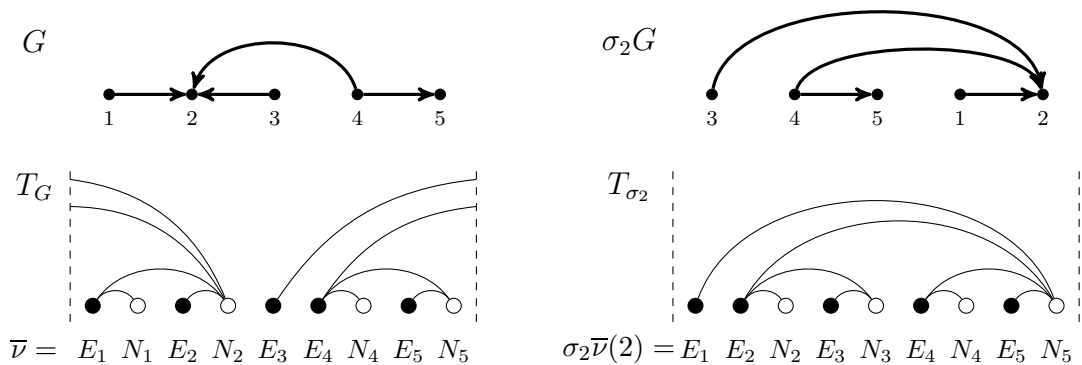
\begin{figure}
\begin{center}
\begin{tikzpicture}
\begin{scope}[xshift=-28, yshift=0, scale=1.1]
	\vertex[fill,label=below:\tiny{$1$}](a1) at (1,0) {};
	\vertex[fill,label=below:\tiny{$2$}](a2) at (2,0) {};
	\vertex[fill,label=below:\tiny{$3$}](a3) at (3,0) {};
	\vertex[fill,label=below:\tiny{$4$}](a4) at (4,0) {};
	\vertex[fill,label=below:\tiny{$5$}](a5) at (5,0) {};

    \draw[>=stealth', ->, very thick] (a1)--(a2);
	\draw[>=stealth', ->, very thick] (a3)--(a2);
	\draw[>=stealth', <-, very thick] (a2) .. controls (2.25, 0.8) and (3.75, 0.8) .. (a4);
	\draw[>=stealth', ->, very thick] (a4)--(a5);
  
\end{scope}

\begin{scope}[xshift = -20, yshift = -80, scale=0.6]
	\node[style={circle,draw, inner sep=2pt, fill=black}] (1)  at (1,0)  {};
	\node[style={circle,draw, inner sep=2pt, fill=none}] (2)  at (2,0)  {};
	\node[style={circle,draw, inner sep=2pt, fill=black}] (3)  at (3,0)  {};
	\node[style={circle,draw, inner sep=2pt, fill=none}] (4)  at (4,0)  {};
   \node[style={circle,draw, inner sep=2pt, fill=black}] (5)  at (5,0)  {};	
	\node[style={circle,draw, inner sep=2pt, fill=black}] (6)  at (6,0)  {};
	\node[style={circle,draw, inner sep=2pt, fill=none}] (7)  at (7,0)  {};
	\node[style={circle,draw, inner sep=2pt, fill=black}] (8)  at (8,0)  {};
	\node[style={circle,draw, inner sep=2pt, fill=none}] (9)  at (9,0)  {};
	
	\node[] (nu)  at (-0.3,-0.95)  {\textcolor{black}{{\small $\overline{\nu}=$}}};	
	\node[] (E1)  at (1,-1)  {\textcolor{black}{{\footnotesize $E_1$}}};
	\node[] (E2)  at (2,-1)  {\textcolor{black}{{\footnotesize $N_1$}}};
	\node[] (E3)  at (3,-1)  {\textcolor{black}{{\footnotesize $E_2$}}};
    \node[] (N1)  at (4,-1)  {\textcolor{black}{{\footnotesize $N_2$}}};
	\node[] (N2)  at (5,-1)  {\textcolor{black}{{\footnotesize $E_3$}}};
	\node[] (N3)  at (6,-1)  {\textcolor{black}{{\footnotesize $E_4$}}};
	\node[] (E4)  at (7,-1)  {\textcolor{black}{{\footnotesize $N_4$}}};
	\node[] (E5)  at (8,-1)  {\textcolor{black}{{\footnotesize $E_5$}}};
	\node[] (N4)  at (9,-1)  {\textcolor{black}{{\footnotesize $N_5$}}};	
	
    \draw[] (1) to [bend left=55] (2);   
    \draw[] (1) to [bend left=55] (4);       
    \draw[] (3) to [bend left=55] (4);     
    \draw[] (6) to [bend left=55] (7);        
    \draw[] (8) to [bend left=55] (9);
    \draw[] (6) to [bend left=55] (9);

   \draw[] (5) to [bend left=25] (9.5,2.8);
   \draw[] (0.5,2.8) to [bend left=30] (4);
   \draw[] (6) to [bend left=25] (9.5,2.2);
   \draw[] (0.5,2.2) to [bend left=30] (4);

   \node[] (w1) at (0.5,-0.6) {};
   \node[] (w2) at (0.5,3.3) {};
   \node[] (w3) at (9.5,-0.6) {};
   \node[] (w4) at (9.5,3.3) {};
	\draw[dashed] (w1) -- (w2);  
	\draw[dashed] (w3) -- (w4);  	   
   
\end{scope}

\begin{scope}[xshift=200, yshift=0, scale=1.1]
	\vertex[fill,label=below:\tiny{$3$}](a1) at (1,0) {};
	\vertex[fill,label=below:\tiny{$4$}](a2) at (2,0) {};
	\vertex[fill,label=below:\tiny{$5$}](a3) at (3,0) {};
	\vertex[fill,label=below:\tiny{$1$}](a4) at (4,0) {};
	\vertex[fill,label=below:\tiny{$2$}](a5) at (5,0) {};

	\draw[>=stealth', ->, very thick] (a2)--(a3);
	\draw[>=stealth', ->, very thick] (a2) .. controls (2.25, 0.7) and (4.75, 0.7) .. (a5);
	\draw[>=stealth', ->, very thick] (a1) .. controls (1.25, 1.3) and (4.75, 1.3) .. (a5);
	\draw[>=stealth', ->, very thick] (a4)--(a5);
	
\end{scope}

\begin{scope}[xshift = 208, yshift = -80, scale=0.6]
	\node[style={circle,draw, inner sep=2pt, fill=black}] (1)  at (1,0)  {};
	\node[style={circle,draw, inner sep=2pt, fill=black}] (2)  at (2,0)  {};
	\node[style={circle,draw, inner sep=2pt, fill=none}] (3)  at (3,0)  {};
	\node[style={circle,draw, inner sep=2pt, fill=black}] (4)  at (4,0)  {};
   \node[style={circle,draw, inner sep=2pt, fill=none}] (5)  at (5,0)  {};	
	\node[style={circle,draw, inner sep=2pt, fill=black}] (6)  at (6,0)  {};
	\node[style={circle,draw, inner sep=2pt, fill=none}] (7)  at (7,0)  {};
	\node[style={circle,draw, inner sep=2pt, fill=black}] (8)  at (8,0)  {};
	\node[style={circle,draw, inner sep=2pt, fill=none}] (9)  at (9,0)  {};
	
	\node[] (nu)  at (-0.7,-0.95)  {\textcolor{black}{{\small $\sigma_2\overline{\nu}(2)=$}}};	
	\node[] (E1)  at (1,-1)  {\textcolor{black}{{\footnotesize $E_1$}}};
	\node[] (E2)  at (2,-1)  {\textcolor{black}{{\footnotesize $E_2$}}};
	\node[] (E3)  at (3,-1)  {\textcolor{black}{{\footnotesize $N_2$}}};
    \node[] (N1)  at (4,-1)  {\textcolor{black}{{\footnotesize $E_3$}}};
	\node[] (N2)  at (5,-1)  {\textcolor{black}{{\footnotesize $N_3$}}};
	\node[] (N3)  at (6,-1)  {\textcolor{black}{{\footnotesize $E_4$}}};
	\node[] (E4)  at (7,-1)  {\textcolor{black}{{\footnotesize $N_4$}}};
	\node[] (E5)  at (8,-1)  {\textcolor{black}{{\footnotesize $E_5$}}};
	\node[] (N4)  at (9,-1)  {\textcolor{black}{{\footnotesize $N_5$}}};	
	
    \draw[] (2) to [bend left=55] (3);   
    \draw[] (4) to [bend left=55] (5);
    \draw[] (6) to [bend left=55] (7);    
    \draw[] (8) to [bend left=55] (9);
    \draw[] (1) to [bend left=70] (9);
    \draw[] (2) to [bend left=60] (9);
    \draw[] (2) to [bend left=55] (5);
    \draw[] (6) to [bend left=55] (9);

   \node[] (w1) at (0.5,-0.6) {};
   \node[] (w2) at (0.5,3.3) {};
   \node[] (w3) at (9.5,-0.6) {};
   \node[] (w4) at (9.5,3.3) {};
	\draw[dashed] (w1) -- (w2);  
	\draw[dashed] (w3) -- (w4);  	   
   
\end{scope}

\begin{scope}[xshift=-25, yshift=20, scale=1]
\node[] (a) at (0,0) {$G$};
\end{scope}

\begin{scope}[xshift=-25, yshift=-35, scale=1]
\node[] (a) at (0,0) {$T_G$};
\end{scope}

\begin{scope}[xshift=200, yshift=20, scale=1]
\node[] (a) at (0,0) {$\sigma_2G$};
\end{scope}

\begin{scope}[xshift=200, yshift=-35, scale=1]
\node[] (a) at (0,0) {$T_{\sigma_2}$};
\end{scope}

\end{tikzpicture}
\end{center}
    \caption{A graph $G$ and the cyclic $(I,J)$-tree $\Phi(G) = T_G$. 
Applying $\sigma_2$ to the vertices of $G$ gives the graph $\sigma_2G$ with only has forward edges. 
The cyclic $(\sigma_2I,\sigma_2J)$-tree $T_{\sigma_2G}$ then has only increasing arcs}
    \label{fig:PhiEx}
\end{figure}

We will now construct a reduction order such that the reduced form of the polynomial $P_\nu$ encodes the cyclic $\nu$-Tamari complex. 
Define the length of an edge $(i,j)$ in a digraph to be $j-i \pmod n$. 
A pair of edges $\{(i,j),(j,k)\}$ (identified with $x_{ij}x_{jk}$) form a \textbf{longest pair at vertex $j$} if $x_{ij}$ and $x_{jk}$ respectively correspond with the longest incoming and outgoing edges at $j$.
The \textbf{length reduction order} $\rho_{\text{len}}$ is the reduction order induced by successively reducing the longest pair $x_{ij}x_{jk}$ with minimal $j$ at each reduction step.
To see that reducing $P_\nu$ according to $\rho_{\text{len}}$ encodes the cyclic $\nu$-Tamari complex, we first need a few lemmas. 

\begin{lemma}
For a non-crossing graph $G$ with only forward edges, the graphs produced by a reduction at a longest pair are also non-crossing.
\label{lem:non-crossing}
\end{lemma}

\begin{proof}
Let $G$ be a non-crossing graph on $[n]$ with all edges directed forward. 
Consider any vertex $j$ with its longest incoming and outgoing edges $(i,j)$ and $(j,k)$ where $i<j<k$, i.e. there are no edges $(i',j)$ and $(j,k')$ with $i'<i$ and $k<k'$.   
Since no edge in $G$ crosses $(i,j)$ or $(j,k)$, the edge $(i,k)$ does not cross any edge in $G$. 
Thus each of the three graphs produced by a reduction at $j$ are non-crossing, and so a reduction produces only non-crossing graphs. 
\end{proof}

\begin{lemma}
\label{lem:anyorder}
All reduced forms of $M(G(\nu(i)))$ obtained with a reduction order in which longest pairs are reduced first at each vertex are equal. 
Furthermore, if $R$ is the unique such reduced form, then the highest degree terms in $R$ encode the graphs in $D^*_{\overline{\nu}(i)}$.
\end{lemma}

\begin{proof}
Consider the reduction tree $\scrT$ whose leaves encode $R$, and let $\rho$ denote the reduction order used.
Note that the length of an edge is preserved under a cyclic relabeling of the vertices of $G(\nu(i))$. Therefore, permuting the vertices of the graphs in $\scrT$ by $\sigma_i$ is a reduction tree $\sigma_i\scrT$ for $G(\sigma_i\nu(i))$ in which longest pairs are reduced first at each vertex.
Let $\sigma_iR$ denote the reduced form corresponding to the leaves of $\sigma_i\scrT$. 
Since the edges of $G(\sigma_i\nu(i))$ are forward edges and non-crossing, by Lemma~\ref{lem:non-crossing}, the leaves of $\sigma_i\scrT$ are non-crossing graphs.  
In addition, the leaves of $\sigma_i\scrT$ do not have multiedges since $G(\sigma_i\nu(i))$ is acyclic as an undirected graph. 
The number of edges in $G(\sigma_i\nu(i))$ is $\abs{\sigma_iI\cup \sigma_iJ}-1$. 
Since simple reductions preserve the number of edges,
the monomials of highest degree in $\sigma_iR$ encode simple alternating graphs on vertex set $\sigma_iI\cup \sigma_iJ$ with $\abs{\sigma_iI\cup \sigma_iJ}-1$ edges, which must therefore be maximal. 

Since the monomials of highest degree in $\sigma_iR$ correspond with the facets of a unimodular triangulation of $\calF_{\widehat{G}(\sigma_i\nu(i))}$, they are enumerated by the volume of $\calF_{\widehat{G}(\sigma_i\nu(i))}$, which by Lemma~\ref{lem:nuCarVol} is $\Cat(\sigma_i\nu(i))$. 
By Lemma~\ref{lem:bIJectionCat(nu)}, the number of cyclic $(\sigma_iI,\sigma_iJ)$-trees using only increasing arcs is also counted by $\Cat(\sigma_i\nu(i))$. 
Since the bijection $\Phi$ of Lemma~\ref{lem:bijectionDnutoIJ} restricts to a bijection between $D_{\sigma_i\overline{\nu}(i)}$ and the cyclic $(\sigma_iI,\sigma_iJ)$-trees with increasing arcs, the monomials of highest degree in $\sigma_iR$ must comprise the whole set $D_{\sigma_i\overline{\nu}(i)}$.
Since a reduced form is determined by its highest degree terms, $\sigma_iR$ is determined by the graphs in $D_{\sigma_i\overline{\nu}(i)}$.

Applying $\sigma^{-1}$ to all graphs in $\sigma_i\scrT$, we recover $\scrT$ and the reduced form $R$. 
The graphs encoded by the the highest degree terms of $R$ are the graphs obtained by applying $\sigma_i^{-1}$ to the graphs in $D_{\sigma_i\overline{\nu}(i)}$, which are precisely the graphs in $D^*_{\overline{\nu}(i)}$.

\end{proof}

\begin{theorem}
\label{thm:lengthReduction}
The triangulation of $\calF_{\widehat{G}_B(\nu)}\equiv \Delta_a\times \Delta_b$ encoded by the reduced form of $P_\nu$ in the reduction order $\rho_{\text{len}}$ is a geometric realization of the cyclic $\nu$-Tamari complex.
\end{theorem}

\begin{proof}
Let $R$ be the reduced form obtained from reducing $P_\nu$ in the order $\rho_{\text{len}}$. 
For convenience we assume that simple reductions were done at each step, so $R$ has only highest degree terms.
While reducing $P_\nu$ in the order $\rho_{\text{len}}$, each $M(G(\nu(i)))$ is reduced in some reduction order that reduces longest pairs first. 
Therefore, by Lemma~\ref{lem:anyorder}, $R$ is equivalently obtained by separately reducing each $M(G(\nu(i)))$ using $\rho_{\text{len}}$, and so the terms of $R$ record the graphs in $\cup_{i\in [w]} D^*_{\overline{\nu}(i)}$.
It then follows from Lemma~\ref{lem:bijectionDnutoIJ} that two monomials in $R$ record adjacent facets (facets sharing a codimension one face) in the cyclic $\nu$-Tamari complex if and only if they differ by a single generator.
Two facets in the triangulation $\calT$ of $\calF_{\widehat{G}_B(\nu)}$ encoded by $R$ are also adjacent if and only if their corresponding monomials in $R$ differ by a single generator. 
It follows that $\calT$ (as a simplicial complex) and the cyclic $\nu$-Tamari complex have the same dual graph. 
Since both are flag simplicial complexes of equal dimension, they are necessarily isomorphic. 
\end{proof}

\begin{corollary}
Reducing $P_\nu$ in the order $\rho_{\text{len}}$ gives a triangulation of $\calF_{\widehat{G}_B(\nu)}$ whose dual graph is the Hasse diagram of the cyclic $\nu$-Tamari poset.
\end{corollary}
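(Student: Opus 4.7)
The plan is to combine Theorem~\ref{thm:lengthReduction} with the definition of the cyclic $\nu$-Tamari poset. By Theorem~\ref{thm:lengthReduction}, the triangulation of $\calF_{\widetilde{G}_B(\nu)}$ obtained by reducing $P_\nu$ in the order $\rho_{\text{len}}$ is a geometric realization of the cyclic $\nu$-Tamari complex. Consequently, the vertices of the dual graph (the facets of the triangulation) are in bijection with cyclic $(I,J)$-trees, and two facets are adjacent in the dual graph exactly when they share a codimension-one face.

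Next I would translate this adjacency into a combinatorial statement about cyclic $(I,J)$-trees. A codimension-one face of the cyclic $\nu$-Tamari complex is a cyclic $(I,J)$-forest with $|I \cup J|-2$ arcs, and such a forest can be completed to a cyclic $(I,J)$-tree in exactly two ways by adding one of two possible arcs compatible with the cyclic non-crossing condition. Hence two cyclic $(I,J)$-trees $T$ and $T'$ are adjacent in the dual graph if and only if $T' = (T \setminus \{(i,j)\}) \cup \{(i',j')\}$ for some arcs $(i,j)$ and $(i',j')$. This is precisely the flip relation defining the cover relations $<_{I,J}$ of the cyclic $\nu$-Tamari poset (with the edge oriented according to whether $i < i'$ or $i' < i$).

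Combining these two observations, the dual graph of the triangulation has the same vertex set and the same (unoriented) edge set as the Hasse diagram of the cyclic $\nu$-Tamari poset; orienting each edge in the direction of the increasing flip gives the Hasse diagram itself. The main technical point, which I would verify explicitly, is that every codimension-one face in the triangulation admits exactly two completions to a facet and that this completion pair coincides with a flip pair; this is essentially the content of the cyclic non-crossing machinery of Ceballos--Padrol--Sarmiento \cite{CPS19}, applied here to the facets produced by Theorem~\ref{thm:lengthReduction}.
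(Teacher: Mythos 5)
The paper gives no explicit proof of this corollary, treating it as an immediate consequence of Theorem~\ref{thm:lengthReduction} and the definition of the cyclic $\nu$-Tamari poset via increasing flips; your proposal spells out exactly that implicit argument, correctly identifying dual-graph adjacency of facets with the flip/cover relation on cyclic $(I,J)$-trees. Your reasoning is sound and matches the paper's intent.
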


The reduction order in Example~\ref{ex.a3b2} was $\rho_{len}$ and hence the dual graph of the resulting triangulation in Figure~\ref{fig:dualGraph} is the Hasse diagram of the cyclic $\nu$-Tamari poset for $\nu = NEENE$.

\begin{figure}
\begin{center}
\begin{tikzpicture}[scale=0.8]
\begin{scope}[scale=1.5, xshift=0, yshift=0]
\tikzstyle{vertex}=[circle, fill=black, inner sep=0pt, minimum size=3pt]

	\vertex (a0) at (0,0) {};
	\vertex (a1) at (-1,-0.5) {};
	\vertex (a2) at (1,-0.5) {};
	\vertex (a3) at (0,1.3) {};
	
	\draw[thick,fill=pur,opacity=0.6] (-1,-0.5) -- (1,-0.5) -- (0,1.3) -- (-1,-0.5);	
		
	\draw[thick,dashed] (a0) -- (a1);
	\draw[thick,dashed] (a0) -- (a3);
	\draw[thick,dashed] (a0) -- (a2);	
	\draw[thick] (a1) -- (a2) -- (a3) -- (a1);

	\vertex (a1) at (-1,-0.5) {};
	\vertex (a2) at (1,-0.5) {};
	\vertex (a3) at (0,1.3) {};

	\node[] (x1) at (1,-0.7) {{\small $\mathbf{x_{12}}x_{34}\mathbf{x_{41}}$}};
	\node[] (x2) at (-0,1.5) {{\small $\mathbf{x_{12}}x_{23}\mathbf{x_{41}}$}};
	\node[] (x3) at (-1,-0.7) {{\small $x_{23}x_{34}x_{41}$}};
	\node[] (x4) at (-0.7,0.15) {{\small $x_{12}x_{23}x_{34}$}};
\end{scope}

\begin{scope}[scale=1.5, xshift=45, yshift=11]
	\node[] (k) at (0,0) {$\longrightarrow$};
\end{scope}

\begin{scope}[scale=1.5, xshift=145, yshift=11]
	\node[] (k) at (0,0) {$\longrightarrow$};
\end{scope}

\begin{scope}[scale=1.5, xshift=170, yshift=11]
	\node[] (k) at (0,0) {$\cdots$};
\end{scope}

\begin{scope}[scale=1.5, xshift=195, yshift=11]
	\node[] (k) at (0,0) {$\longrightarrow$};
\end{scope}

\begin{scope}[scale=1.5, xshift=100, yshift=0]
\tikzstyle{vertex}=[circle, fill=black, inner sep=0pt, minimum size=3pt]

	\vertex (a0) at (0,0) {};
	\vertex (a1) at (-1,-0.5) {};
	\vertex (a21) at (0.6,-0.5) {};
	\vertex (a22) at (0.8,-0.3) {};	
	\vertex (a31) at (0,1.1) {};
	\vertex (a32) at (-0.2,0.9) {};

	\draw[thick,dashed] (a0) -- (a1);
	\draw[thick,dashed] (a0) -- (a31);
	\draw[thick,dashed] (a0) -- (a22);	

	\draw[fill=pur,opacity=0.6,thick] (-1,-0.5) -- (0.6,-0.5) -- (0.8,-0.3) -- (0,1.1) -- (-0.2,0.9) -- (-1,-0.5);	

	\draw[thick] (a1) -- (a21);
	\draw[thick] (a1) -- (a32);
	\draw[thick] (a31) -- (a22);	
	\draw[thick] (a21) -- (a22);
	\draw[thick] (a31) -- (a32);	
	\draw[thick] (a32) -- (a21);

	\node[] (x1) at (1,-0.7) {{\small $x_{34}x_{41}x_{42}$}};
	\node[] (x1) at (1.55,-0.3) {{\small $x_{12}x_{34}x_{42}$}};	
	\node[] (x2) at (-0.1,1.3) {{\small $\mathbf{x_{12}x_{23}}x_{42}$}};
	\node[] (x2) at (-0.95,1.0) {{\small $x_{23}x_{41}x_{42}$}};	
	\node[] (x3) at (-1,-0.7) {{\small $x_{23}x_{34}x_{41}$}};
	\node[] (x4) at (-0.73,0.15) {{\small $\mathbf{x_{12}x_{23}}x_{34}$}};
\end{scope}

\begin{scope}[scale=1.9, xshift=200, yshift=0]
\tikzstyle{vertex}=[circle, fill=black, inner sep=0pt, minimum size=3pt]

	\vertex (a01) at (-0.2,-0.2) {};
	\vertex (a02) at (-0.3,0) {};	
	\vertex (a03) at (-0.1,0.2) {};
	\vertex (a04) at (0.15,0.2) {};
	\vertex (a05) at (0.3,-0.2) {};	

	\vertex (a11) at (-0.7,-0.4) {};
	\vertex (a12) at (-0.8,-0.2) {};
	\vertex (a13) at (-0.7,0.1) {};
	\vertex (a14) at (-0.4,-0.35) {};
	\vertex (a15) at (-0.5,-0.5) {};		

	\vertex (a21) at (0.7,-0.4) {};
	\vertex (a22) at (0.8,-0.0) {};
	\vertex (a23) at (0.6,-0.1) {};
	\vertex (a24) at (0.4,-0.35) {};
	\vertex (a25) at (0.5,-0.5) {};		
	
	\vertex (a31) at (0.15,0.85) {};
	\vertex (a32) at (-0.12,0.85) {};	
	\vertex (a33) at (0.45,0.8) {};
	\vertex (a34) at (0.25,0.7) {};
	\vertex (a35) at (-0.35,0.7) {};		
	\draw[thick, dashed] (a01) -- (a02) -- (a03) -- (a04) -- (a05) -- (a01);
	
	\draw[thick, dashed] (a31) -- (a04);
	\draw[thick, dashed] (a32) -- (a03);

	\draw[thick, dashed] (a21) -- (a05);

	\draw[thick, dashed] (a12) -- (a02);
	\draw[thick, dashed] (a11) -- (a01);

	\draw[thick,fill=pur,opacity=0.6] (-0.5,-0.5) -- (0.5,-0.5) -- (0.7,-0.4) -- (0.8,-0.0) -- (0.45,0.8) -- (0.18,0.85) -- (-0.12,0.85) -- (-0.35,0.7) -- (-0.7,0.1) -- (-0.8,-0.2) -- (-0.7,-0.4) -- (-0.5,-0.5);

	\vertex (a11) at (-0.7,-0.4) {};
	\vertex (a12) at (-0.8,-0.2) {};
	\vertex (a13) at (-0.7,0.1) {};
	\vertex (a14) at (-0.4,-0.35) {};
	\vertex (a15) at (-0.5,-0.5) {};		

	\vertex (a21) at (0.7,-0.4) {};
	\vertex (a22) at (0.8,-0.0) {};
	\vertex (a23) at (0.6,-0.1) {};
	\vertex (a24) at (0.4,-0.35) {};
	\vertex (a25) at (0.5,-0.5) {};		
	
	\vertex (a31) at (0.15,0.85) {};
	\vertex (a32) at (-0.12,0.85) {};	
	\vertex (a33) at (0.45,0.8) {};
	\vertex (a34) at (0.25,0.7) {};
	\vertex (a35) at (-0.35,0.7) {};

	\draw[thick] (a31) -- (a32) -- (a35) -- (a34) -- (a33) -- (a31);

	\draw[thick] (a21) -- (a22) -- (a23) -- (a24) -- (a25) -- (a21);

	\draw[thick] (a11) -- (a12) -- (a13) -- (a14) -- (a15) -- (a11);

	\draw[thick] (a22) -- (a33);
	\draw[thick] (a23) -- (a34);

	\draw[thick] (a15) -- (a25);
	\draw[thick] (a14) -- (a24);
	\draw[thick] (a13) -- (a35);

\end{scope}
\end{tikzpicture}
\end{center}
    \caption{The reduction of $P_\nu$ in the length reduction order for $\nu = (NE)^3$ encodes a sequence of edge truncations of an $3$-simplex giving rise to the $3$-cyclohedron}
    \label{fig:truncationsCyclo}
\end{figure}
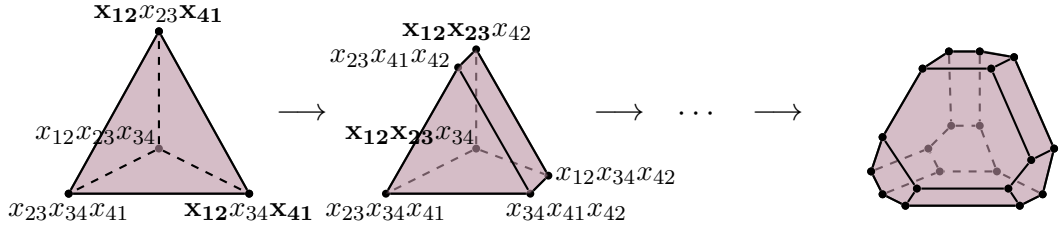

\begin{remark}
When $\nu$ is the staircase path $(NE)^{n}$, reductions of $P_\nu$ can be viewed as successive edge truncations of an $n$-simplex. 
The truncation order induced by $\rho_{\text{len}}$ yields the cyclohedron, as in Figure~\ref{fig:truncationsCyclo}. 
\end{remark}

\section*{Acknowledgements}

The author thanks Martha Yip for many inspiring conversations and helpful suggestions.


\DeclareRobustCommand{\VAN}[3]{#3}
\printbibliography

\end{document}